
\NeedsTeXFormat{LaTeX2e}
\documentclass[12pt]{amsart}
\usepackage{graphicx}
\usepackage{a4wide}
\usepackage{amssymb}
\usepackage{amsthm}
\usepackage{amsmath}
\usepackage{amscd}
\usepackage{verbatim}
\usepackage{nicefrac}
\usepackage[all]{xy}
\usepackage{tikz}
\usepackage{enumerate}
\usepackage{url}
\textheight8.5in \textwidth6.6in \numberwithin{equation}{section}
\theoremstyle{plain}
\newtheorem{theorem}{Theorem}[section]
\newtheorem{corollary}[theorem]{Corollary}
\newtheorem{lemma}[theorem]{Lemma}

\theoremstyle{definition}
\newtheorem{definition}[theorem]{Definition}
\newtheorem*{example}{Example}

\theoremstyle{remark}
\newtheorem*{remark}{Remark}

\newcommand{\SL}{\text {\rm SL}}

\newcommand{\Q}{\mathbb{Q}}
\newcommand{\prim}{\text {\rm prim}}
\newcommand{\parti}{\text {\rm part}}

\newcommand{\Z}{\mathbb{Z}}

\newcommand{\C}{\mathbb{C}}

\renewcommand{\H}{\mathbb{H}}
\newcommand{\F}{\mathbb{F}}

\def\O{\mathcal O}

\newcommand{\im}{\mbox{Im}}

\newcommand{\calQ}{\mathcal{Q}}

\newcommand{\fraka}{\mathfrak a}

\newcommand{\End}{\operatorname{End}}

\newcommand{\Gal}{\operatorname{Gal}}

\newcommand{\cl}{{\rm cl}}
\newcommand{\Ell}{{\rm Ell}}
\newcommand{\Fp}{\F_p}
\newcommand{\disc}{\operatorname{disc}}
\def\ndiv{\hbox{${ }\not | \ { }$}}
\newcommand{\inkron}[2]{\genfrac {(}{)}{0.9pt}{}{#1}{#2}}
\newcommand{\height}{{\rm ht}}
\newcommand{\softO}{\widetilde O}
\newcommand{\jt}{{\tilde\jmath}}

\theoremstyle{plain}
\newtheorem*{gamma_thm}{\rm\textbf{Theorem \ref{gamma_thm}}}
\newtheorem*{general_thm}{\rm\textbf{Theorem \ref{general_thm}}}
\newtheorem*{partition_thm}{\rm\textbf{Theorem \ref{partition_thm}}}

\begin{document}

\title[ class polynomials for nonholomorphic modular functions] { class polynomials for nonholomorphic\\ modular functions}

\author{Jan Hendrik Bruinier, Ken Ono, and Andrew V. Sutherland}
\address{Fachbereich Mathematik, Technische Universit\"at Darmstadt,
Schlossgartenstrasse 7, D-64289, Darmstadt, Germany}
\email{bruinier@mathematik.tu-darmstadt.de}

\address{Department of Mathematics and Computer Science,
Emory University, Atlanta, GA 30322}
\email{ono@mathcs.emory.edu}

\address{Department of Mathematics, Massachusetts Institute of Technology, Cambridge, MA 02139}
\email{drew@math.mit.edu}

\thanks{The first author was supported by DFG grant BR-2163/2-2.
The second author  thanks the support of NSF grant DMS -1157289 and the
Asa Griggs Candler Fund.
The third author received support from NSF grant DMS-1115455}.

\begin{abstract} We give algorithms for computing the {\it singular moduli} of suitable nonholomorphic modular functions $F(z)$.
By combining the theory of {\it isogeny volcanoes} with a beautiful observation of Masser concerning the nonholomorphic Eisenstein series
$E_2^{*}(z)$, we obtain CRT-based algorithms that compute the class polynomials $H_D(F;x)$, whose roots are the discriminant~$D$ singular moduli for $F(z)$. By applying these results to a specific weak Maass form  $F_p(z)$,
we obtain a CRT-based algorithm for computing {\it partition class polynomials}, a sequence of polynomials whose traces give the partition numbers $p(n)$. Under the GRH, the expected running time of this algorithm is $O(n^{5/2+o(1)})$.
Key to these results is a fast CRT-based algorithm for computing the classical modular polynomial $\Phi_m(X,Y)$ that we obtain by extending the isogeny volcano approach previously developed for prime values of $m$.
 \end{abstract}

\maketitle

\section{Introduction and Statement of results}

As usual, we let
\begin{equation}
j(z):=\frac{\left(1+240\sum_{n=1}^{\infty}\sum_{d\mid n}d^3q^n\right)^3}{q\prod_{n=1}^{\infty}(1-q^n)^{24}}=q^{-1}+744+196884q+21493760q^2+\cdots
\end{equation}
be Klein's classical elliptic modular function on $\operatorname{SL}_2(\Z)$
($q:=e^{2\pi i z}$ throughout). The values of $j(z)$ at imaginary
quadratic arguments in the upper-half of the complex plane are
known as {\it singular moduli}.
Two examples are the Galois conjugates
\begin{displaymath}
 j\left(\frac{1+\sqrt{-15}}{2}\right)=\frac{-191025-85995\sqrt{5}}{2} \ \ \ \
{\text {\rm and}}\ \ \ \ j\left(\frac{1+\sqrt{-15}}{4}\right)=\frac{-191025+85995\sqrt{5}}{2}.
\end{displaymath}
These numbers play an important role in algebraic number theory.
Indeed, they are algebraic integers that
generate ring
class field extensions of imaginary quadratic fields, and they are the $j$-invariants
of elliptic curves with complex multiplication \cite{Borel, Cox, D1, D2}.

The problem of computing singular moduli has a long history
that dates back to the works of Kronecker, and is highlighted by famous
calculations
by Berwick \cite{Be} and Weber \cite{W}. Historically, these numbers have been difficult to compute.
More recently, Gross and Zagier \cite{G-Z} determined the
prime factorization of the absolute norm of suitable differences of
singular moduli (further work in this direction has been
carried out by Dorman \cite{Do1, Do2}), and Zagier \cite{Zagier} identified
the algebraic traces of singular moduli as coefficients of half-integral weight modular forms.

Here we consider the problem
of computing the minimal polynomials of singular moduli,
the so-called Hilbert class polynomials. This problem has been the subject of much recent study. For example,
Belding, Br\"oker, Enge, and Lauter \cite{BBEL}, and the third
author \cite{Sutherland}, have provided efficient methods of computation that are based on the theory of elliptic curves with
complex multiplication (CM).  The basic approach in that work is simple. One uses theoretical facts about elliptic curves with CM to quickly compute the reductions of these polynomials modulo a set of suitable primes $p$, and one then compiles these reductions via the Chinese Remainder Theorem
(CRT) to obtain the exact polynomials. The primes $p$ are chosen in a way that facilitates the computation, and in particular, they split completely in the ring class field $K_\O$ of the imaginary quadratic order~$\O$ associated to the singular moduli whose minimal polynomial one wishes to compute.
Under the Generalized Riemann Hypothesis (GRH), this
algorithm computes the discriminant $D$ Hilbert class polynomial with an expected running time of $\softO(|D|)$.\footnote{We use the ``soft" asymptotic notation $\softO(n)$ to denote bounds of the form $O(n\log^c n)$.}
In \cite{BLS}, the third author, together with Br\"oker and Lauter, further developed these ideas to compute
modular polynomials $\Phi_\ell$ using the theory of {\it isogeny volcanoes}.

We extend these results to the setting
of nonholomorphic modular functions such as
\begin{equation}\label{gammaz}
\gamma(z):=\frac{E_4(z)}{6E_6(z)j(z)}\cdot E_2^{*}(z)-\frac{7j(z)-6912}{6j(z)(j(z)-1728)},
\end{equation}
where
\begin{equation}\label{E2}
E_2^*(z):=1-\frac{3}{\pi \im(z)}-24\sum_{n=1}^{\infty}
\sum_{d\mid n}dq^n
\end{equation}
is the weight 2 nonholomorphic modular Eisenstein series, and where
\begin{displaymath}
E_4(z):=1+240\sum_{n=1}^{\infty}\sum_{d\mid n}d^3q^n\ \ \ \ \ {\text {\rm and}}\ \ \
E_6(z):=1-504\sum_{n=1}^{\infty}\sum_{d\mid n}d^5q^n
\end{displaymath}
are the usual weight 4 and weight 6 modular Eisenstein series.

\begin{remark} The function $\gamma(z)$ plays an important role in this paper. We shall make use of an observation of Masser \cite{Masser} which
gives a description of its singular moduli in terms
of the coefficients of certain representations of the classical modular polynomials.
\end{remark}

We first recall the setting of Heegner points on modular curves (see
\cite{GKZ}).  Let $N>1$, and let $D<0$ be a quadratic discriminant coprime to $N$. The group $\Gamma_0(N)$ acts on the
discriminant~$D$ positive definite integral binary quadratic forms
$$
Q(X,Y)=[a,b,c]:=aX^2+bXY+cY^2
$$
with $N\mid a$.  This action preserves $b\pmod{2N}$. Therefore, if
$\beta^2\equiv D\pmod{4N}$, then it is natural to consider
$\calQ_{N,D,\beta}$, the set of those discriminant $D$ forms
$Q=[a,b,c]$ for which $0<a\equiv 0\pmod{N}$ and $b\equiv
\beta\pmod{2N}$, and we may also consider the subset $\calQ_{N,D,\beta}^{\prim}$ obtained by restricting to primitive forms.
The number of $\Gamma_0(N)$ equivalence classes in
$\calQ_{N,D,\beta}$ is the Hurwitz-Kronecker class number $H(D)$, and the natural map
defines a bijection
$$
\calQ_{N,D,\beta}/\Gamma_0(N)\longrightarrow \calQ_D/\SL_2(\Z),
$$
where $\calQ_D$  is the set of discriminant
$D$ positive definite integral binary quadratic
forms
(see the proposition on p.~505 of \cite{GKZ}).
This bijection also holds when restricting to primitive forms, in which case the number of $\Gamma_0(N)$ equivalence classes in $\calQ_{N,D,\beta}^{\prim}$, and the number of $\SL_2(\Z)$ equivalence classes in $\calQ_D^{\prim}$, is given by the class number $h(D)$.

For modular functions $F(z)$ on $\SL_2(\Z)$, our  goal is to calculate
the \emph{class polynomial}
\begin{equation}\label{classpoly1}
H_D(F;x):=\prod_{Q\in \calQ_D^{\prim}/\SL_2(\Z)}\bigl(x-F(\alpha_Q)\bigr),
\end{equation}
where $\alpha_Q\in \H$ is a root of $Q(x,1)=0$.
For modular functions $F(z)$ on $\Gamma_0(N)$ and discriminants $D<0$ coprime to $N$, our goal is to calculate
the {\it class polynomial}
\begin{equation}
\label{classpolyN}
H_{D,\beta}(F;x):=\prod_{Q\in \calQ_{N,D,\beta}^{\prim}/\Gamma_0(N)}\bigl(x-F(\alpha_Q)\bigr).
\end{equation}

We first consider the
special case of the $\SL_2(\Z)$ nonholomorphic modular function $\gamma(z)$,
defined by \eqref{gammaz}, and we compute the $\Q$-rational
polynomials $H_D(\gamma;x)$.

\begin{theorem}
\label{gamma_thm}
For discriminants $D<-4$ that are not of the form
$D=-3d^2$, Algorithm 1 (see \S\ref{gamma_case}) computes $H_D(\gamma;x)$.
Under the GRH, its expected running time is $\softO(|D|^{7/2})$ and it uses $\softO(|D|^2)$ space.
\end{theorem}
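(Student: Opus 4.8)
The plan is to realize $H_D(\gamma;x)$ by an explicit CRT (``Chinese Remainder Theorem'') computation, running in close parallel to the CM method for the Hilbert class polynomial $H_D(j;x)$, with the one genuinely new ingredient being a way to evaluate the nonholomorphic $\gamma$ at the Heegner points $\alpha_Q$. First I would use Masser's observation \cite{Masser} to reduce each value $\gamma(\alpha_Q)$ to purely algebraic data. Writing $\tau=\alpha_Q$ for the CM point of a primitive form $Q=[a,b,c]$ of discriminant $D$, the matrix $g_Q=\kzxz{b}{2c}{-2a}{-b}$ satisfies $g_Q\tau=\tau$ and $\det g_Q=|D|$; when $g_Q$ is primitive (the generic case, with an evident adjustment when $D$ is even) the pair $(j(\tau),j(\tau))$ lies on $\Phi_{|D|}(X,Y)=0$. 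Differentiating $\Phi_{|D|}(j(z),j(g_Qz))=0$ once and twice at $z=\tau$, using $g_Q'(\tau)=|D|/D=-1$ and the classical expression of $\tfrac{1}{2\pi i}j''/j'$ in terms of $E_2,E_4,E_6$, one solves for the \emph{algebraic} weight-zero quantity $s_2(\tau):=E_2^{*}(\tau)E_4(\tau)/E_6(\tau)$, and hence for $\gamma(\tau)=\tfrac{1}{6j(\tau)}s_2(\tau)-\tfrac{7j(\tau)-6912}{6j(\tau)(j(\tau)-1728)}$, in terms of the first and second partials of $\Phi_{|D|}$ at $(j(\tau),j(\tau))$. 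This simultaneously shows that every $\gamma(\alpha_Q)$ is algebraic and that $H_D(\gamma;x)\in\Q[x]$: since $\gamma$ has rational ``Fourier data'' and the $j(\alpha_Q)$ are permuted by $\Gal(K_\O/K)$ (with $K=\Q(\sqrt{D})$) via Shimura reciprocity, the elementary symmetric functions of the $\gamma(\alpha_Q)$ are $\Gal(\overline{\Q}/\Q)$-fixed.

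With this formula in hand, Algorithm 1 would proceed prime by prime. For each prime $p$ splitting completely in $K_\O$ (equivalently, $4p=t^2+|D|v^2$ solvable) I would: (i) compute the set $\{j(\alpha_Q)\bmod p\}_Q$ by the standard CM method, finding one CM $j$-invariant with the correct endomorphism ring and generating the rest via the class-group action on the isogeny volcano; (ii) compute $\Phi_{|D|}(X,Y)\bmod p$ by the isogeny-volcano algorithm for the classical modular polynomial, in the composite-level generality developed earlier in the paper (extending \cite{BLS}); (iii) evaluate $\Phi_{|D|}$ together with its first and second partials at each diagonal point $(j(\alpha_Q),j(\alpha_Q))\bmod p$ and substitute into Masser's formula to obtain $\gamma(\alpha_Q)\bmod p$; and (iv) form $H_D(\gamma;x)\bmod p=\prod_Q\bigl(x-\gamma(\alpha_Q)\bigr)\bmod p$. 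Finally I would recover $H_D(\gamma;x)\in\Q[x]$ from its reductions by CRT with rational reconstruction (or, after bounding the denominator, by clearing it and reconstructing an integral polynomial).

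For the running time, the degree of $H_D(\gamma;x)$ is $h(D)=\softO(|D|^{1/2})$. The dominant cost is the per-prime work: computing $\Phi_{|D|}\bmod p$ is quasilinear in its $\softO(|D|^2)$ coefficients, and evaluating it and its derivatives at each of the $\softO(|D|^{1/2})$ Heegner points costs $\softO(|D|^2)$ apiece, for a per-prime total of $\softO(|D|^{5/2})$. A height bound on the singular moduli $\gamma(\alpha_Q)$—whose denominators inherit the $\softO(|D|)$-bit size of the coefficients of $\Phi_{|D|}$—shows the coefficients of $H_D(\gamma;x)$ have bit-size $\softO(|D|)$, so that under GRH a set of $\softO(|D|)$ suitable split primes, each of $\softO(\log|D|)$ bits, suffices (this is where GRH enters, guaranteeing enough small split primes). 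Multiplying the two factors yields the claimed $\softO(|D|^{7/2})$ time, while the dominant storage, namely one reduced modular polynomial $\Phi_{|D|}\bmod p$, gives the $\softO(|D|^2)$ space bound.

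The hypotheses $D<-4$ and $D\neq-3d^2$ enter at the correctness step: $D<-4$ ensures that no $\alpha_Q$ is $\SL_2(\Z)$-equivalent to the elliptic points $i$ or $\rho$, so $j'(\alpha_Q)\neq0$ and the derivative manipulation is legitimate, while the discriminants $D=-3d^2$—those of orders in $\Q(\sqrt{-3})$—must be set aside because the extra order-$3$ automorphism at the $j=0$ point forces Masser's formula to be handled with care; checking that the generic formula applies verbatim on the complementary range is part of establishing correctness. I expect the main obstacle to be exactly this analytic-to-algebraic reduction: making the second-order identity for $\Phi_{|D|}$ fully rigorous—including the precise cancellation of the nonholomorphic $3/(\pi\,\im(z))$ term of $E_2^{*}$, which is the heart of Masser's observation—and then propagating it to a clean, provable $\softO(|D|)$ height bound on the coefficients of $H_D(\gamma;x)$, since that bound is what simultaneously pins down the number of primes and the final $\softO(|D|^{7/2})$ estimate.
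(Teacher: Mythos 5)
Your overall architecture matches the paper's: Masser's identity expressing $\gamma(\alpha_Q)$ through the partial derivatives (equivalently the Taylor coefficients $\beta_{\mu,\nu}$) of $\Phi_{|D|}$ at the diagonal CM point, evaluated modulo many split primes via the isogeny-volcano computations of $H_D$ and $\Phi_{|D|}\bmod p$, then CRT. The correctness discussion is essentially the paper's. The genuine gap is in the complexity accounting, specifically the height bound. You assert that the coefficients of $H_D(\gamma;x)$ have bit-size $\softO(|D|)$ because their denominators ``inherit the $\softO(|D|)$-bit size of the coefficients of $\Phi_{|D|}$.'' That inference is wrong: the denominator contributed by a single class is $\beta_{0,1}(\alpha_Q)$, which is a polynomial of degree $\psi(|D|)=\softO(|D|)$ in $j(\alpha_Q)$, and for the principal form $\log|j(\alpha_Q)|\approx\pi\sqrt{|D|}$, so $\log|\beta_{0,1}(\alpha_Q)|$ can be as large as $\softO(|D|^{3/2})$ --- far larger than $\height(\Phi_{|D|})=\softO(|D|)$. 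The paper's Lemma~\ref{lemma:gammaheight} accordingly proves only $\height\bigl(\delta H_D(\gamma;x)\bigr)\le B_\gamma(D)=\softO(|D|^{3/2})$ with $\delta=\prod_Q\beta_{0,1}(\alpha_Q)$, and therefore uses $\softO(|D|^{3/2})$ primes; the hope of a smaller bound is exactly what the authors flag, without proof, in their remark about possibly improving the running time to $\softO(|D|^{5/2})$.

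The two factors in your product are in fact each off, in opposite directions, and only their product coincidentally agrees with the claim. With the justified height bound your argument yields $\softO(|D|^{3/2})\cdot\softO(|D|^{5/2})=\softO(|D|^{4})$, not $\softO(|D|^{7/2})$: your per-prime cost of $\softO(|D|^{5/2})$ comes from specializing $\Phi_{|D|}(\cdot,Y)$ separately at each of the $h(D)=\softO(|D|^{1/2})$ roots, whereas the paper performs all $h(D)$ specializations simultaneously by fast multipoint evaluation (step 3d of Algorithm~1, via \cite[Alg.~10.7]{GG}) in $\softO(|D|^2)$ per prime, so that $\softO(|D|^{3/2})\times\softO(|D|^2)=\softO(|D|^{7/2})$. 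To repair your argument you must both replace the unjustified $\softO(|D|)$ height claim by an honest $\softO(|D|^{3/2})$ bound (summing $\psi(|D|)\log^{+}|j(\alpha_Q)|$ over the class group) and incorporate fast multipoint evaluation into the per-prime work. The remaining deviations are cosmetic: you rederive Masser's formula by differentiating $\Phi_{|D|}(j(z),j(g_Qz))=0$ rather than quoting Lemma~\ref{MasserLemma}, and you invoke Shimura reciprocity for rationality where the paper simply observes that the coefficients of $H_D(\gamma;x)$ are symmetric integral rational expressions in the $j(\alpha_Q)$.
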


\begin{remark} The discussion on p. 118 of \cite{Masser}, makes it clear how to modify Algorithm 1 to handle discriminants of the form $D=-3d^2$.  We expect that the bound on its expected running time can be improved to $\softO(|D|^{5/2})$ using tighter bounds on the size of the coefficients of $H_D(\gamma;x)$.
\end{remark}

A key building block of Algorithm 1 is a new algorithm to compute the classical modular polynomial $\Phi_m(X,Y)$, which parameterizes pairs of elliptic curves related by a cyclic isogeny of degree $m$.
Here we extend the iosgeny volcano approach that was introduced in \cite{BLS} to compute $\Phi_m$ for prime $m$ so that we can now efficiently handle all values of $m$.
The result is Algorithm~1.1 (see \S\ref{gamma_case}), which, under the GRH, computes $\Phi_m$ in $\tilde{O}(m^3)$ time.
For suitable primes~$p$ it can compute $\Phi_m$ modulo $p$ in $\tilde{O}(m^2)$ time (and space), which is crucial to the efficient implementation of Algorithm~1.

\begin{example}
We have used Algorithm 1 to compute $H_D(\gamma;x)$ for $D>-20000$.  Some small examples are listed below.

\bigskip
\begin{center}
\begin{tabular}{cccccc}
\hline\hline\vspace{-8pt} \ \ \\
$D$ & $H_D(\gamma;x)$
\\ \hline \\
$-3$  & $x - \frac{23}{2^{11}\cdot 3^3}$\\ \ \ \\
$-4$  & $x$\\ \ \ \\
$-7$  & $x - \frac{181}{3^6\cdot 5^3\cdot 7}$\\ \ \ \\
$-8$  & $x + \frac{61}{2^6\cdot 5^3\cdot 7^2}$\\ \ \ \\
$-11$& $x - \frac{289}{2^{14}\cdot 7^2\cdot 11}$\\ \ \ \\
$-12$& $x + \frac{67}{2^3\cdot 3^3\cdot 5^3\cdot 11^2}$\\ \ \ \\
$-15$& $x^2 + \frac{313}{3^4\cdot 5\cdot 11^3}\cdot x - \frac{1045769}{3^8\cdot 5^3\cdot 7^4\cdot 11^5}$\\ \ \ \\
$-16$& $x + \frac{179}{3^6\cdot 7^2\cdot 11^3}$\\ \ \ \\
$-19$& $x - \frac{275}{2^{14}\cdot 3^6\cdot 19}$\\ \ \ \\
$-20$& $x^2 - \frac{43925}{2^6\cdot 11^3\cdot 19^2}\cdot x - \frac{2307859}{2^{18}\cdot 5^3\cdot 11^5\cdot 19^2}$\\ \ \ \\
$-23$&\ \ $x^3 + \frac{8123835989}{5^3\cdot 7^2\cdot 11^3\cdot 17^3\cdot 19^2\cdot 23}
\cdot x^2 + \frac{6062055706222}{5^6\cdot 7^4\cdot 11^4\cdot 17^3\cdot 19^2\cdot 23}\cdot
x - \frac{346923509992369}{5^9\cdot 7^6\cdot 11^7\cdot 17^3\cdot 19^2\cdot 23^2}$\\\vspace{-8pt} \ \ \\
\hline\hline
\end{tabular}
\end{center}
\medskip
\end{example}

We extend Algorithm 1 to compute class polynomials for a large class of
nonholomorphic modular functions.  This class includes, for example, the $\SL_2(\Z)$-function
$$
K(z):=288\cdot \frac{E_2^{*}(z)E_4(z)E_6(z)+3E_4(z)^3+2E_6(z)^2}{E_4(z)^3-E_6(z)^2}
$$
considered by Zagier (see \S 9 of \cite{Zagier}) in his famous paper on traces of
singular moduli.
More generally, it includes suitable modular functions $F(z)$ of the form
$$
F(z):=\partial_{-2}\circ \partial_{-4}\cdots \partial_{2-2k}(\mathfrak{F}),
$$
where $\mathfrak{F}(z)$ is a weight $2-2k$ weakly holomorphic modular form on $\Gamma_0(N)$
whose Fourier expansions at cusps are algebraic. Here the differential operator
$\partial_h$, which maps weight $h$ modular forms to weight $h+2$ modular forms, is defined by
\begin{equation}\label{del}
\partial_h:=\frac{1}{2\pi i}\cdot \frac{\partial}{\partial z}-\frac{h}{4\pi \im(z)}.
\end{equation}

We consider a specific class of such modular functions.
Let $\O$ be the imaginary quadratic order with discriminant $D$, ring class field $K_\O$, and fraction field $K=\Q(\sqrt{D})$.
Let $c_1$ and~$c_2$ denote fixed positive integers.
Let $F(z)$ be a modular function (i.e. weight 0) for $\Gamma_0(N)$ that can be written in the form
\[
F(z)=\sum A_n(z)\gamma(z)^n,
\]
where each $A_n\in \Q(j)$ is a rational function of $j(z)$.
The function $F(z)$ is said to be {\bf good} for a discriminant $D<0$ coprime to $N$ if it satisfies the following:
\begin{enumerate}
\item Each $A_n(\alpha_Q)$ lies in $K_\O$ for all $Q\in \calQ_D^{\prim}/\SL_2(\Z)$.
\item The polynomial $c_1|D|^{c_2h}H_D(F;x)$ has integer coefficients.
\end{enumerate}

\begin{remark}
The class of good modular functions includes many nonholomorphic modular functions, and it includes meromorphic modular functions which may have poles in the upper half plane (poles at CM points are excluded by condition (1)).
\end{remark}

For good modular functions, we obtain the following general result.

\begin{theorem}\label{general_thm}  For discriminants $D<-4$ not of the form
$D=-3d^2$, Algorithm~2 (see \S\ref{good_case}) computes class polynomials for good modular functions~$F(z)$.  Under the GRH, its expected running time is $\softO(|D|^{\nicefrac{5}{2}})$, and it uses $\softO(|D|^2)$ space.
\end{theorem}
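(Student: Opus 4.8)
The plan is to compute $H_D(F;x)$ by the explicit-CRT paradigm already underlying Theorem~\ref{gamma_thm}: fix a set $S$ of rational primes, compute each reduction $H_D(F;x)\bmod p$ for $p\in S$, and reconstruct the coefficients over $\Z$ by the Chinese Remainder Theorem. Here condition~(2) in the definition of a good function is exactly what makes the reconstruction possible, since $c_1|D|^{c_2h}H_D(F;x)\in\Z[x]$: we reconstruct the integer coefficients of this cleared polynomial and then divide out the known factor $c_1|D|^{c_2h}$. Throughout we evaluate the $\Gamma_0(N)$-function $F$ at representatives $\alpha_Q$ with $N\mid a$ supplied by the Heegner bijection $\calQ_{N,D,\beta}^{\prim}/\Gamma_0(N)\cong\calQ_D^{\prim}/\SL_2(\Z)$. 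Under GRH, effective Chebotarev guarantees a supply of primes $p=\softO(|D|)$ with $4p=t^2-v^2D$ for some $t,v\in\Z$, so that the discriminant-$D$ CM points are realized over $\F_p$ and $p$ splits completely in $K_\O$; we take $|S|$ just large enough that $\prod_{p\in S}p$ exceeds twice the coefficient bound established below.

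For a single such prime I would first produce the reductions of the singular moduli $j(\alpha_Q)$, i.e.\ the roots of the Hilbert class polynomial modulo $p$, by the isogeny-volcano/CM methods of \cite{Sutherland,BLS} already used in Algorithm~1. Feeding these into the $\gamma$-machinery of Algorithm~1 (Masser's observation, realized modulo $p$ through Algorithm~1.1 for $\Phi_m$) yields $\gamma(\alpha_Q)\bmod p$ for each $Q$. Because $F=\sum_nA_n(z)\gamma(z)^n$ with each $A_n\in\Q(j)$, and because condition~(1) guarantees that $A_n(\alpha_Q)\in K_\O$ has no pole at the CM point, the reduction $A_n(\alpha_Q)\bmod p$ is obtained by simply evaluating the rational function $A_n$ at $j(\alpha_Q)\bmod p$. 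Assembling $F(\alpha_Q)=\sum_nA_n(\alpha_Q)\gamma(\alpha_Q)^n\bmod p$ and forming $\prod_Q\bigl(x-F(\alpha_Q)\bigr)\bmod p$ then gives $H_D(F;x)\bmod p$; multiplying by $c_1|D|^{c_2h}$ keeps everything in $\F_p[x]$ and matches the integer polynomial to be reconstructed.

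The per-prime cost is dominated by the same step as in Algorithm~1, namely the modular-polynomial computation $\Phi_m\bmod p$ with $m=O(|D|)$ underlying Masser's evaluation of $\gamma$, which by Algorithm~1.1 costs $\softO(|D|^2)$ time and space; since we process one prime at a time and the accumulating CRT data for the $h$ coefficients is only $\softO(|D|)$, the overall space is $\softO(|D|^2)$. The improvement of the exponent from the $7/2$ of Theorem~\ref{gamma_thm} to $5/2$ comes entirely from the number of primes. Each coefficient of $c_1|D|^{c_2h}H_D(F;x)$ is $\pm c_1|D|^{c_2h}$ times an elementary symmetric function of the $F(\alpha_Q)$, so its logarithmic height is at most $\sum_Q\log^+|F(\alpha_Q)|+c_2h\log|D|+O(h)$. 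Under the hypotheses $D<-4$ and $D\neq-3d^2$ the CM points avoid the poles of $\gamma$ at $i$ and $\rho$, giving $\log^+|\gamma(\alpha_Q)|=O(\log|D|)$; the numerators of the $A_n$ contribute $\log^+|A_n(\alpha_Q)|=O(\sqrt{|D|}/a_Q)$ with $\sum_Q 1/a_Q=O(\log|D|)$; and $c_2h\log|D|=\softO(|D|^{1/2})$ since $h=\softO(|D|^{1/2})$ under GRH. Hence each coefficient height is $\softO(|D|^{1/2})$, so $|S|=\softO(|D|^{1/2})$ primes suffice and the total cost is $\softO(|D|^{1/2})\cdot\softO(|D|^2)=\softO(|D|^{5/2})$.

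I expect the main obstacle to be precisely this archimedean height bound, since it is what pins the exponent at $5/2$. Making it rigorous requires a uniform bound on $|F(\alpha_Q)|$: a quantitative lower bound on the distance from each $\alpha_Q$ to the poles $i,\rho$ of $\gamma$ (which is what forces the hypotheses on $D$), together with a uniform bound on $|A_n(\alpha_Q)|$ that absorbs the poles of the rational functions $A_n$ into the denominator recorded by condition~(2). The analytic evaluation of $\gamma(\alpha_Q)\bmod p$ through Masser's identity is the other delicate ingredient, but it is inherited intact from Algorithm~1, so the genuinely new work for this theorem lies in the height analysis and in verifying that reassembling $F$ from its $\gamma$- and $j$-data is compatible with the integrality asserted in condition~(2).
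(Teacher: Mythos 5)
Your proposal follows essentially the same route as the paper: a CRT reconstruction in which each per-prime computation is dominated by the $\softO(|D|^2)$ cost of $\Phi_{|D|}\bmod p$ via Algorithm~1.1, with the exponent dropping from $7/2$ to $5/2$ because the height of $c_1|D|^{c_2h}H_D(F;x)$ is $\softO(|D|^{1/2})$ rather than the $\softO(|D|^{3/2})$ bound forced by the denominator $\delta$ in Algorithm~1. Your sketch of the archimedean height estimate is in fact more explicit than the paper's, which delegates it to Lemma~\ref{lemma:alg2ht} and the argument of Lemma~8 of \cite{Sutherland}; the only slips are cosmetic (the suitable primes are not of size $\softO(|D|)$, only $\log p=O(\log|D|)$, which is all the analysis needs).
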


In fact, Algorithm~2 can be readily adapted to treat modular functions of the form $F(z)=\sum A_n(z)\gamma(z)^n$ where the coefficient functions $A_n(z)$ do not necessarily lie in $\Q(j)$, using the techniques developed by Enge and the third author in \cite{EngeSutherland}.
As an example, we apply Algorithm~2 to obtain a CRT-based algorithm for computing the ``partition polynomials"
defined by the first two authors in \cite{BruinierOno2011}. These are essentially the class polynomials
of the $\Gamma_0(6)$ nonholomorphic modular function
\begin{equation}\label{Fp}
F_p(z):=-\partial_{-2}(P(z))=\left(1-\frac{1}{2\pi \im(z)}\right)q^{-1}+
\frac{5}{\pi \im(z)}+\left(29+\frac{29}{2\pi \im(z)}\right)q+\dots,
\end{equation}
where $P(z)$ is the weight $-2$ weakly holomorphic modular form
\begin{displaymath}
P(z):=\frac{1}{2}\cdot \frac{E_2(z)-2E_2(2z)-3E_2(3z)+6E_2(6z)}
{\eta(z)^2\eta(2z)^2\eta(3z)^2\eta(6z)^2}=q^{-1}-10-29q+\dots.
\end{displaymath}
These polynomials are defined as
\begin{equation}\label{partitionpolynomial}
H_{n}^{\parti}(x):=\prod_{Q\in \calQ_{6,1-24n,1}}(x-P(\alpha_Q)).
\end{equation}
In contrast with (\ref{classpoly1}) and (\ref{classpolyN}), we stress that the roots of these polynomials
include singular moduli for imprimitive forms (if any).
The interest in these polynomials arises from the
fact that (see Theorem 1.1 of \cite{BruinierOno2011})
\begin{equation}\label{ptrace}
H_{n}^{\parti}(x)=x^{h(1-24n)}-(24n-1)p(n)x^{h(1-24n)-1}+\dots,
\end{equation}
where $p(n)$ is the usual partition function. Since the roots $P(\alpha_Q)$
are algebraic numbers that lie in
the usual discriminant $1-24n$ ring class field,
we have the following finite algebraic formula
$$
p(n)=\frac{1}{24n-1}\sum_{Q\in \calQ_{6,1-24n,1}} P(\alpha_Q).
$$

\begin{remark}
By the work of the first two authors \cite{BruinierOno2011}, combined with recent results by Larson and Rolen \cite{LarsonRolen}, it is known  that each $(24n-1)P(\alpha_Q)$ is an
algebraic integer.
\end{remark}

As a consequence of Theorem~\ref{general_thm}, we obtain the following result.

\begin{theorem}\label{partition_thm} For all positive integers $n$, Algorithm 3 (see \S\ref{P_case}) computes $H_{n}^{\parti}(x)$.
Under the GRH, its expected running time is $\softO(n^{\nicefrac{5}{2}})$ and it uses $\softO(n^2)$ space.
\end{theorem}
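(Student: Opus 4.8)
The plan is to exhibit $H_{n}^{\parti}(x)$ as a product of class polynomials of the good modular function $F_p(z)$ and then to apply Theorem~\ref{general_thm}; the genuinely new work lies in fitting $F_p$ into the framework and in bookkeeping the imprimitive forms. First I would bring $F_p$ into the shape required by the definition of a good function. Expanding the operator in \eqref{del} with $h=-2$ gives $F_p=-\frac{1}{2\pi i}P'(z)-\frac{1}{2\pi\im(z)}P(z)$, and inserting the standard identity $E_2^*(z)=E_2(z)-\frac{3}{\pi\im(z)}$ to eliminate the term $\frac{1}{\im(z)}$ rewrites $F_p$ as a polynomial of degree one in $E_2^*(z)$. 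Via \eqref{gammaz} this becomes $F_p=A_0+A_1\gamma$, where $A_0,A_1$ are weight-$0$ meromorphic modular functions on $\Gamma_0(6)$ assembled from $P,P',E_2,E_4,E_6$ and $j$. Because these coefficients lie in the function field of $X_0(6)$ rather than in $\Q(j)$, Algorithm~2 must be run in the adapted form indicated after Theorem~\ref{general_thm}, using the techniques of \cite{EngeSutherland}.

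Second, I would verify that $F_p$ is good for each discriminant that arises. Since $1-24n\equiv 1\pmod{24}$ is coprime to $6$, classical complex multiplication theory shows that the values $A_n(\alpha_Q)$ lie in the ring class field $K_\O$, giving condition~(1). Condition~(2) follows from the fact, due to \cite{BruinierOno2011} and \cite{LarsonRolen}, that each $(24n-1)P(\alpha_Q)$ is an algebraic integer: scaling the roots by $|D|=24n-1$ clears denominators, so that $c_1|D|^{c_2 h}H_{n}^{\parti}(x)\in\Z[x]$ for suitable constants, and the coefficient sizes are then controlled exactly as in the proof of Theorem~\ref{general_thm}. This is also where one uses the Bruinier--Ono identification of $P(\alpha_Q)$ with the singular modulus of $F_p$ at $\alpha_Q$, so that the roots produced by the algorithm are indeed the $P(\alpha_Q)$.

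Third, I would account for the imprimitive forms, the feature distinguishing \eqref{partitionpolynomial} from \eqref{classpoly1}--\eqref{classpolyN}. Writing a form of discriminant $D=1-24n$ with content $f$ as $fQ'$ with $Q'$ primitive of discriminant $D/f^2$, one has $\alpha_{fQ'}=\alpha_{Q'}$ and hence $P(\alpha_{fQ'})=P(\alpha_{Q'})$; sorting $\calQ_{6,1-24n,1}$ by content therefore factors
\[
H_{n}^{\parti}(x)=\prod_{f^2\mid D}H_{D/f^2,\beta_f}(F_p;x),
\]
a product over the suborders of discriminant $D/f^2$, each factor being a class polynomial over primitive forms. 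Since $f$ is coprime to $6$, each $D/f^2\equiv 1\pmod{24}$ is again a negative discriminant with $|D/f^2|\ge 23$, so every factor satisfies the hypotheses of Theorem~\ref{general_thm}: we have $D/f^2<-4$ automatically, and $D/f^2$ is never of the form $-3d^2$ because $3\nmid D$. Algorithm~3 thus computes each factor by Algorithm~2 and multiplies them together.

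Finally, the running time follows by substitution. There are only $\softO(1)$ divisors $f$ with $f^2\mid D$, and the dominant factor is $f=1$; since $|D|=24n-1=\Theta(n)$, Theorem~\ref{general_thm} gives expected time $\softO(|D|^{5/2})=\softO(n^{5/2})$ and space $\softO(|D|^2)=\softO(n^2)$ for that factor, which absorbs all the others. The main obstacle is the first stage: making the adaptation of Algorithm~2 to the level-$6$ coefficient functions $A_0,A_1$ precise while retaining the denominator control of condition~(2), since it is this control that bounds the coefficient sizes and hence the number of CRT primes, and thereby the claimed complexity.
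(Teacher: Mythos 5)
Your strategy coincides with the paper's: use the Larson--Rolen decomposition to write the weak Maass form as a degree-one polynomial in $\gamma$ with coefficients in the function field of $X_0(6)$, factor $H_n^{\parti}$ over the suborders according to the content of the forms, run the adapted Algorithm~2 on each primitive factor, and observe that the fundamental-discriminant factor dominates the cost. However, two points that the paper handles with dedicated lemmas are missing from your sketch, and both affect correctness rather than mere exposition.

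First, your factorization $H_{n}^{\parti}(x)=\prod_{f^2\mid D}H_{D/f^2,\beta_f}(F_p;x)$ is only the paper's intermediate identity: you never resolve the residues $\beta_f$. The algorithm computes class polynomials attached to the Heegner points with $b\equiv 1\bmod{12}$, but for divisors $f\equiv\pm5\bmod{12}$ one gets $\beta_f\equiv\pm5$, and $H_{\Delta,\pm5}(P;x)$ is \emph{not} equal to $H_{\Delta,1}(P;x)$: since the Atkin--Lehner involution $W_3$ negates $P$, one has $H_{\Delta,\pm5}(P;x)=(-1)^{h(\Delta)}H_{\Delta,1}(P;-x)$, which is the sign twist $\varepsilon(u)$ of Lemma~\ref{lemma:twisted}. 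This is already visible at $n=24$, where $H_{24}^{\parti}(x)=-H_{-23}(P;-x)\,H_{-575}(P;x)$; computing each factor with $\beta=1$ and multiplying, as you propose, produces the wrong polynomial whenever some $\beta_f\equiv\pm5$.

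Second, the coefficient functions are pinned down through class polynomials $H_D(\widehat A;x)$ and $H_D(B;x)$ whose coefficients lie in $K=\Q(\sqrt{D})$, not in $\Q$. In the CRT framework one must choose a square root of $D$ modulo each prime $p$, and these choices cannot be made consistently across primes. The paper's Lemma~\ref{lemma:involution} (via the involution $W_6$) shows that the multiset $\{P(\alpha_Q)\}$ is stable under complex conjugation, so either choice of square root yields the same polynomial mod $p$ and the per-prime data glue to an integer polynomial; without this observation the reconstruction step is not justified. Your appeal to the techniques of \cite{EngeSutherland} points at the right toolbox but does not identify this obstruction, nor the mechanism (a gcd of $\Psi_g(x,j_k)$ with $H_D(g;x)\bmod p$) by which the correct root among the twelve roots of $\Psi_g(x,j_k)$ is selected for each singular modulus. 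The remaining ingredients of your argument --- goodness of the function, the height bound, and the $\softO(n^{5/2})$ accounting --- match the paper.
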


\begin{remark}
For the simpler task of computing individual values of $p(n)$, an efficient implementation of Rademacher's formula such as the one given in \cite{Johansson} is both asymptotically and practically faster than Algorithm~3.
\end{remark}

\begin{example} We have used Algorithm 3 to compute $H_{n}^{\parti}(x)$ for $n\leq 750$.  Some small examples are listed below.

\bigskip
\begin{center}
\begin{tabular}{cccccc}
\hline\hline\vspace{-8pt} \ \ \\
$n$ && $(24n-1)p(n)$ && $H_{n}^{\parti}(x)$
\\ \hline \\
$1$ &&  $23$ && $x^3-23x^2+\frac{3592}{23}x-419$  \\ \ \ \\
$2$ &&  $94$ && $x^5-94x^4+\frac{169659}{47}x^3-65838x^2+\frac{1092873176}{47^2}x
+\frac{1454023}{47}$  \\ \ \ \\
$3$ && $213$ && $x^7-213x^6+\frac{1312544}{71}x^5-723721x^4+\frac{44648582886}{71^2}x^3$\\
\ \ \\
\ \ && \ \ &&\ \ \ \ \ \ \ \ \ \ \ \ \ $+\frac{9188934683}{71}x^2+
\frac{166629520876208}{71^3}x+\frac{2791651635293}{71^2}$\\
\ \ \\
$4$ && $475$ && $x^8-475x^7+\frac{9032603}{95}x^6-9455070x^5+
\frac{3949512899743}{95^2}x^4$\\ \ \ \\
\ \ && \ \ && \ \ \ \ \ \ \ $-\frac{97215753021}{19}x^3+\frac{9776785708507683}{95^3}x^2$
\\ \ \ \\
\ \ && \ \ && \ \ \ \ \ \ \ \ \ \ \ \ \ \ \ \ \ \ \ $-\frac{53144327916296}{19^2}x-
\frac{134884469547631}{5^4\cdot 19}$\\
\vspace{-8pt} \ \ \\
\hline\hline
\end{tabular}
\end{center}
\end{example}

\bigskip
This paper is organized as follows. In \S\ref{nutsandbolts} we recall
essential facts about elliptic curves with complex multiplication, and
singular moduli for modular forms and certain nonholomorphic modular functions.
In \S\ref{alg} we use these results to derive our algorithms.
In \S\ref{example} we conclude with a detailed example of the execution
of Algorithm~3 for $n=1$ and $n=24$.

\section{nuts and bolts}\label{nutsandbolts}

We begin with some preliminaries on elliptic curves with complex multiplication
and singular moduli for suitable modular functions.

\subsection{Elliptic curves with complex multiplication}
We recall some standard facts from the theory of complex multiplication, referring to \cite{Cox,Lang,Serre} for proofs and further background.
Let~$\O$ be an imaginary quadratic order, identified by its discriminant~$D$.
The $j$-invariant of the lattice~$\O$ is an algebraic integer whose minimal polynomial is the \emph{Hilbert class polynomial}~$H_D$.
If~$\fraka$ is an invertible $\O$-ideal (including $\fraka=\O$), then the torus $\C/\fraka$ corresponds to an elliptic curve $E/\C$ with \emph{complex multiplication} (CM) by~$\O$, meaning that its endomorphism ring $\End(E)$ is isomorphic to $\O$, and every such curve arises in this fashion.
Equivalent ideals yield isomorphic elliptic curves, and this gives a bijection between the ideal class group $\cl(\O)$ and the set
\begin{equation}
\Ell_\O(\C) := \{j(E/\C)\colon \End(E)\cong\O\},
\end{equation}
the $j$-invariants of the elliptic curves defined over $\C$ with CM by $\O$.
We then have
\begin{equation}\label{eq:class1}
H_D(x) := \prod_{j_i\in\Ell_\O(\C)}(x-j_i) = H_D(j;x).
\end{equation}
The splitting field of $H_D$ over $K=\Q(\sqrt{D})$ is the \emph{ring class field} $K_\O$.
It is an abelian extension of $K$ whose Galois group $\Gal(K_\O/K)$ is isomorphic to $\cl(\O)$, via the Artin map.

This isomorphism can be made explicit via isogenies.
Let $E/\C$ be an elliptic curve with CM by $\O$ and let $\fraka$ be an invertible $\O$-ideal.
There is a uniquely determined separable isogeny whose kernel is the subgroup of points annihilated by every endomorphism in $\fraka\subset\O\hookrightarrow\End(E)$.
The image of this isogeny is an elliptic curve that also has CM by~$\O$, and this defines an action of the ideal group of~$\O$ on the set $\Ell_\O(\C)$.
Principal ideals act trivially, and the induced action of the class group is regular.
Thus $\Ell_\O(\C)$ is a principal homogeneous space, a \emph{torsor}, for the finite abelian group $\cl(\O)$.

If $p$ is a (rational) prime that splits completely in $K_\O$, equivalently, for $p>3$, a prime satisfying
the \emph{norm equation}
\begin{equation}\label{eq:norm}
4p = t^2-v^2D
\end{equation}
for some nonzero integers $t$ and $v$, then $H_D$ splits completely in $\Fp[x]$ and its roots form the set
\begin{equation}
\Ell_\O(\Fp) := \{j(E/\Fp)\colon \End(E)\cong\O\},
\end{equation}
Conversely, every ordinary (not supersingular) elliptic curve $E/\Fp$ has CM by some imaginary quadratic order $\O$ in which the Frobenius endomorphism corresponds to an element of norm~$p$ and trace~$t$.

\subsection{Modular polynomials via isogeny volcanoes}
For each positive integer $m$, the classical modular polynomial $\Phi_m$ is the minimal polynomial of the function $j(mz)$ over the field $\C(j)$.
As a polynomial in two variables, $\Phi_m\in\Z[X,Y]$ is symmetric in $X$ and $Y$.
If $E/k$ is an elliptic curve and $m$ is prime to the characteristic of $k$, then the roots of $\Phi_m(j(E),Y)$ are precisely the $j$-invariants of the elliptic curves  that are related to $E$ by a cyclic $m$-isogeny; see~\cite{Igusa,Lang} for these and other properties of~$\Phi_m$.

For distinct primes $\ell$ and~$p$, we define the \emph{graph of $\ell$-isogenies} $G_\ell(\Fp)$, with vertex set $\Fp$ and edges $(j_1,j_2)$ present if and only if $\Phi_\ell(j_1,j_2)=0$.
Ignoring the connected components of $0$ and $1728$, the ordinary components of $G_\ell(\Fp)$ are $\ell$-\emph{volcanoes} \cite{FM,Kohel}, a term we take to include cycles as a special case; see \cite{Sutherland2} for further details on isogeny volcanoes.
In this paper we focus on $\ell$-volcanoes of a special form, for which we can compute $\Phi_\ell\bmod p$ in a particularly efficient way, using \cite[Alg.~2.1]{BLS}.

Let $\O$ be an order in an imaginary quadratic field $K$ with maximal order $\O_K$, and let $\ell$ be an odd prime not dividing $[\O_K:\O]$. Assume $D=\disc(\O)<-4$.
Suppose $p$ is a prime of the form $4p=t^2-\ell^2v^2D$ with $p\equiv 1\bmod \ell$ and $\ell\ndiv v$; equivalently, $p$ splits completely in the ray class field of conductor $\ell$ for $\O$ and does not split completely in the ring class field of the order with index $\ell^2$ in $\O$.
Then the components of $G_\ell(\Fp)$ that intersect $\Ell_\O(\Fp)$ are isomorphic $\ell$-volcanoes with two levels: the \emph{surface}, whose vertices lie in $\Ell_\O(\Fp)$, and the \emph{floor}, whose vertices lie in $\Ell_{\O'}(\Fp)$, where $\O'$ is the order of index $\ell$ in $\O$.
Each vertex on the surface is connected to $1+\inkron{D}{\ell}=0,1$ or $2$ \emph{siblings} on the surface, and $\ell-\inkron{D}{\ell}$ \emph{children} on the floor.
An example with $\ell=7$ is shown below:

\begin{figure}[htp]
\begin{tikzpicture}
\draw (-4.5,0) ellipse (1 and 0.1);
\draw (-1.5,0) ellipse (1 and 0.1);
\draw (1.5,0) ellipse (1 and 0.1);
\draw (4.5,0) ellipse (1 and 0.1);
\draw (-4.5,0.1) -- (-4.6,-0.06);
\draw (-4.5,0.1) -- (-4.56,-0.06);
\draw (-4.5,0.1) -- (-4.52,-0.06);
\draw (-4.5,0.1) -- (-4.48,-0.06);
\draw (-4.5,0.1) -- (-4.44,-0.06);
\draw (-4.5,0.1) -- (-4.4,-0.06);
\draw[fill=red] (-4.5,0.1) circle (0.04);
\draw (-1.5,0.1) -- (-1.6,-0.05);
\draw (-1.5,0.1) -- (-1.56,-0.05);
\draw (-1.5,0.1) -- (-1.52,-0.05);
\draw (-1.5,0.1) -- (-1.48,-0.05);
\draw (-1.5,0.1) -- (-1.44,-0.05);
\draw (-1.5,0.1) -- (-1.4,-0.05);
\draw[fill=red] (-1.5,0.1) circle (0.04);
\draw (1.5,0.1) -- (1.6,-0.05);
\draw (1.5,0.1) -- (1.56,-0.05);
\draw (1.5,0.1) -- (1.52,-0.05);
\draw (1.5,0.1) -- (1.48,-0.05);
\draw (1.5,0.1) -- (1.44,-0.05);
\draw (1.5,0.1) -- (1.4,-0.05);
\draw[fill=red] (1.5,0.1) circle (0.04);
\draw (4.5,0.1) -- (4.60,-0.06);
\draw (4.5,0.1) -- (4.56,-0.06);
\draw (4.5,0.1) -- (4.52,-0.06);
\draw (4.5,0.1) -- (4.48,-0.06);
\draw (4.5,0.1) -- (4.44,-0.06);
\draw (4.5,0.1) -- (4.4,-0.06);
\draw[fill=red] (4.5,0.1) circle (0.04);
\draw (-5,-0.1) -- (-5.35,-0.7);
\draw[fill=red] (-5.35,-0.7) circle (0.04);
\draw (-5,-0.1) -- (-5.21,-0.7);
\draw[fill=red] (-5.21,-0.7) circle (0.04);
\draw (-5,-0.1) -- (-5.07,-0.7);
\draw[fill=red] (-5.07,-.7) circle (0.04);
\draw (-5,-0.1) -- (-4.93,-0.7);
\draw[fill=red] (-4.93,-0.7) circle (0.04);
\draw (-5,-0.1) -- (-4.79,-0.7);
\draw[fill=red] (-4.79,-0.7) circle (0.04);
\draw (-5,-0.1) -- (-4.65,-0.7);
\draw[fill=red] (-4.65,-0.7) circle (0.04);
\draw[fill=red] (-5,-0.1) circle (0.04);

\draw (-4,-0.1) -- (-4.35,-0.7);
\draw[fill=red] (-4.35,-0.7) circle (0.04);
\draw (-4,-0.1) -- (-4.21,-0.7);
\draw[fill=red] (-4.21,-0.7) circle (0.04);
\draw (-4,-0.1) -- (-4.07,-0.7);
\draw[fill=red] (-4.07,-0.7) circle (0.04);
\draw (-4,-0.1) -- (-3.93,-0.7);
\draw[fill=red] (-3.93,-0.7) circle (0.04);
\draw (-4,-0.1) -- (-3.79,-0.7);
\draw[fill=red] (-3.79,-0.7) circle (0.04);
\draw (-4,-0.1) -- (-3.65,-0.7);
\draw[fill=red] (-3.65,-0.7) circle (0.04);
\draw[fill=red] (-4,-0.1) circle (0.04);

\draw (-2,-0.1) -- (-2.35,-0.7);
\draw[fill=red] (-2.35,-0.7) circle (0.04);
\draw (-2,-0.1) -- (-2.21,-0.7);
\draw[fill=red] (-2.21,-0.7) circle (0.04);
\draw (-2,-0.1) -- (-2.07,-0.7);
\draw[fill=red] (-2.07,-0.7) circle (0.04);
\draw (-2,-0.1) -- (-1.93,-0.7);
\draw[fill=red] (-1.93,-0.7) circle (0.04);
\draw (-2,-0.1) -- (-1.79,-0.7);
\draw[fill=red] (-1.79,-0.7) circle (0.04);
\draw (-2,-0.1) -- (-1.65,-0.7);
\draw[fill=red] (-1.65,-0.7) circle (0.04);
\draw[fill=red] (-2,-0.1) circle (0.04);

\draw (-1,-0.1) -- (-1.35,-0.7);
\draw[fill=red] (-1.35,-0.7) circle (0.04);
\draw (-1,-0.1) -- (-1.21,-0.7);
\draw[fill=red] (-1.21,-0.7) circle (0.04);
\draw (-1,-0.1) -- (-1.07,-0.7);
\draw[fill=red] (-1.07,-0.7) circle (0.04);
\draw (-1,-0.1) -- (-0.93,-0.7);
\draw[fill=red] (-0.93,-0.7) circle (0.04);
\draw (-1,-0.1) -- (-0.79,-0.7);
\draw[fill=red] (-0.79,-0.7) circle (0.04);
\draw (-1,-0.1) -- (-0.65,-0.7);
\draw[fill=red] (-0.65,-0.7) circle (0.04);
\draw[fill=red] (-1,-0.1) circle (0.04);

\draw (1,-0.1) -- (1.35,-0.7);
\draw[fill=red] (1.35,-0.7) circle (0.04);
\draw (1,-0.1) -- (1.21,-0.7);
\draw[fill=red] (1.21,-0.7) circle (0.04);
\draw (1,-0.1) -- (1.07,-0.7);
\draw[fill=red] (1.07,-0.7) circle (0.04);
\draw (1,-0.1) -- (0.93,-0.7);
\draw[fill=red] (0.93,-0.7) circle (0.04);
\draw (1,-0.1) -- (0.79,-0.7);
\draw[fill=red] (0.79,-0.7) circle (0.04);
\draw (1,-0.1) -- (0.65,-0.7);
\draw[fill=red] (0.65,-0.7) circle (0.04);
\draw[fill=red] (1,-0.1) circle (0.04);

\draw (2,-0.1) -- (2.35,-0.7);
\draw[fill=red] (2.35,-0.7) circle (0.04);
\draw (2,-0.1) -- (2.21,-0.7);
\draw[fill=red] (2.21,-0.7) circle (0.04);
\draw (2,-0.1) -- (2.07,-0.7);
\draw[fill=red] (2.07,-0.7) circle (0.04);
\draw (2,-0.1) -- (1.93,-0.7);
\draw[fill=red] (1.93,-0.7) circle (0.04);
\draw (2,-0.1) -- (1.79,-0.7);
\draw[fill=red] (1.79,-0.7) circle (0.04);
\draw (2,-0.1) -- (1.65,-0.7);
\draw[fill=red] (1.65,-0.7) circle (0.04);
\draw[fill=red] (2,-0.1) circle (0.04);

\draw (4,-0.1) -- (4.35,-0.7);
\draw[fill=red] (4.35,-0.7) circle (0.04);
\draw (4,-0.1) -- (4.21,-0.7);
\draw[fill=red] (4.21,-0.7) circle (0.04);
\draw (4,-0.1) -- (4.07,-0.7);
\draw[fill=red] (4.07,-0.7) circle (0.04);
\draw (4,-0.1) -- (3.93,-0.7);
\draw[fill=red] (3.93,-0.7) circle (0.04);
\draw (4,-0.1) -- (3.79,-0.7);
\draw[fill=red] (3.79,-0.7) circle (0.04);
\draw (4,-0.1) -- (3.65,-0.7);
\draw[fill=red] (3.65,-0.7) circle (0.04);
\draw[fill=red] (4,-0.1) circle (0.04);

\draw (5,-0.1) -- (5.35,-0.7);
\draw[fill=red] (5.35,-0.7) circle (0.04);
\draw (5,-0.1) -- (5.21,-0.7);
\draw[fill=red] (5.21,-0.7) circle (0.04);
\draw (5,-0.1) -- (5.07,-0.7);
\draw[fill=red] (5.07,-0.7) circle (0.04);
\draw (5,-0.1) -- (4.93,-0.7);
\draw[fill=red] (4.93,-0.7) circle (0.04);
\draw (5,-0.1) -- (4.79,-0.7);
\draw[fill=red] (4.79,-0.7) circle (0.04);
\draw (5,-0.1) -- (4.65,-0.7);
\draw[fill=red] (4.65,-0.7) circle (0.04);
\draw[fill=red] (5,-0.1) circle (0.04);
\end{tikzpicture}
\end{figure}

Provided $h(\O)\ge \ell+2$, this set of $\ell$-volcanoes contains enough information to completely determine $\Phi_\ell\bmod p$.
This is the basis of the algorithm in \cite[Alg.\ 2.1]{BLS} to compute $\Phi_\ell\bmod p$, which we make use of here.
Selecting a sufficiently large set of such primes $p$ allows one to compute $\Phi_\ell$ over $\Z$ (via the CRT), or modulo an arbitrary integer $M$ (via the explicit CRT).
Our requirements for the order $\O$ and the primes $p$ are summarized in the definition below.

\begin{definition}\label{def:suitable}
Let $\ell>2$ be prime, and let $c>1$ be an absolute constant independent of $\ell$.
An imaginary quadratic order $\O$ is said to be \emph{suitable} for $\ell$ if $\ell\ndiv [\O_K:\O]$ and $\ell+2\le h(\O) \le c\ell$. A prime $p$ is then said to be \emph{suitable} for $\ell$ and $\O$ if $p\equiv 1\bmod \ell$ and $4p=t^2-\ell^2v^2\disc(\O)$, for some $t,v\in\Z$ with $\ell\ndiv v$.
\end{definition}

The definition of suitability above is weaker than that used in \cite{BLS}, but this only impacts logarithmic factors in the running time that are hidden by our soft asymptotic notation.

\subsection{Selecting primes with the GRH}\label{sec:primes}
In order to apply the isogeny volcano method to compute $\Phi_\ell$ (or the polynomials $H_D(F; x)$ we wish to compute), we need a sufficiently large set~$S$ of suitable primes $p$.
We deem $S$ to be sufficiently large whenever $\sum_{p\in S} \log p \ge B + \log 2$, where~$B$ is an upper bound on the logarithmic height of the integer coefficients that we wish to compute with the CRT.\footnote{When the coefficients are rational numbers that are not integers, we first clear denominators.}
For $\Phi_\ell(X,Y)=\sum_{i,j} a_{ij}X^iY^j$, we may bound $\height(\Phi_\ell):=\log\max_{i,j}|a_{ij}|$ using
\begin{equation}\label{eq:phiheight}
\height(\Phi_\ell) \le 6\ell\log \ell + 18\ell,
\end{equation}
as proved in \cite{BS}.

Heuristically (and in practice), it is easy to construct the set $S$.
Given an order~$\O$ of discriminant~$D$ suitable for $\ell$, we fix $v=2$ if $D\equiv 1\bmod 8$ and $v=1$ otherwise, and for increasing $t\equiv 2\bmod \ell$ of correct parity we test whether $p=(t^2-v^2\ell^2D)/4$ is prime.
We add each such prime to $S$, and stop when $S$ is sufficiently large.

Unfortunately, we cannot prove that this method will find \emph{any} primes, even under the GRH.
Instead, we use Algorithm~6.2 in~\cite{BLS}, which picks an upper bound~$x$ and generates random integers $t$ and $v$ in suitable intervals to obtain candidate primes $p=(t^2-v^2\ell^2D)/4\le x$ that are then tested for primality.
The algorithm periodically increases $x$, so its expected running time is $O(B^{1+\epsilon})$, even without the GRH.

Under the GRH, there are effective constants $c_1,c_2 > 0$ such that $x\ge c_1\ell^6\log^{4}{\ell}$ guarantees at least $c_2\ell^3\log^{3}{\ell}$ suitable primes less than $x$, by \cite[Thm.\ 4.4]{BLS}.
Asymptotically, this is far more than the $O(\ell)$ primes we need to compute $\Phi_\ell$.
We note that $S$ contains $O(B/\log B)$ primes (unconditionally), and under the GRH we have $\log p = O(\log B+\log\ell)$ for all $p\in S$.

\subsection{Modular singular moduli}

The results from the previous section can be cast in terms of the CM values of the $j$-function.
Indeed, we have the following classical theorem (for example, see \cite{Borel, Cox}) which summarizes
some of the most important properties of singular moduli for Klein's $j$-function.

\begin{theorem}\label{CMtheorem}
Suppose that $Q=ax^2+bxy+cy^2$ is a primitive positive definite binary
quadratic form with discriminant $D=b^2-4ac<0$, and let $\alpha_Q\in \H$
be the point for which $Q(\alpha_Q,1)=0$. Then the following are true:
\begin{enumerate}
\item The singular modulus $j(\alpha_Q)$ is an algebraic integer whose minimal polynomial has degree equal to the class number $h(D)$.
\item The Galois orbit of $j(\alpha_Q)$ consists of the $j(z)$-singular moduli
associated to the $h(D)$ equivalence classes in $\calQ_D^{\prim}/\SL_2(\Z)$.
\item If $K=\Q(\sqrt{D})$, then the discriminant $D$ singular moduli are conjugate over $K$.
Moreover, $K(j(\alpha_Q))$ is the ring class field of the quadratic order of discriminant $D$; in the case that $D$ is a fundamental discriminant, $K(j(\alpha_Q))$ is the Hilbert class field of~$K$.
\end{enumerate}
\end{theorem}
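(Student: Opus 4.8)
The plan is to reduce all three assertions to the facts about $\Ell_\O(\C)$ recalled in \S\ref{nutsandbolts} by passing through the classical dictionary between primitive binary quadratic forms and proper (invertible) ideals of the order $\O=\O_D$ of discriminant $D$ in $K=\Q(\sqrt{D})$. First I would fix, for a primitive form $Q=[a,b,c]$, the CM point $\alpha_Q=\frac{-b+\sqrt{D}}{2a}\in\H$ and attach to it the lattice $\Lambda_Q=\Z+\Z\alpha_Q$, together with the ideal $\fraka_Q=\Z a+\Z\frac{-b+\sqrt{D}}{2}=a\Lambda_Q$. The two points to verify are: (i) $\fraka_Q$ is an invertible $\O$-ideal precisely because $Q$ is primitive (non-primitive forms yield ideals of a proper suborder, which is exactly why one restricts to $\calQ_D^{\prim}$); and (ii) $Q$ and $Q'$ are $\SL_2(\Z)$-equivalent if and only if $\Lambda_Q$ and $\Lambda_{Q'}$ are homothetic, if and only if $\fraka_Q$ and $\fraka_{Q'}$ represent the same class in $\cl(\O)$. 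Together these give a bijection $\calQ_D^{\prim}/\SL_2(\Z)\longrightarrow\cl(\O)$, which one upgrades to a group isomorphism by matching Gauss composition with ideal multiplication.

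With the dictionary in hand, parts (1) and (2) are essentially bookkeeping. Since $j$ is $\SL_2(\Z)$-invariant and depends only on the homothety class of the period lattice, $j(\alpha_Q)=j(\C/\Lambda_Q)$ depends only on the class of $Q$; because $\C/\Lambda_Q$ has CM by $\O$, the value $j(\alpha_Q)$ lies in $\Ell_\O(\C)$, and as $Q$ ranges over $\calQ_D^{\prim}/\SL_2(\Z)$ these values range bijectively over $\Ell_\O(\C)$. By \S\ref{nutsandbolts} each such value is an algebraic integer and $H_D=\prod_{j_i\in\Ell_\O(\C)}(x-j_i)$, as in \eqref{eq:class1}. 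Since $\Gal(K_\O/K)\cong\cl(\O)$ acts simply transitively on $\Ell_\O(\C)$, the Galois group acts transitively on the roots of $H_D$ already over $K$ (a fortiori over $\Q$), so $H_D$ is irreducible; being monic with rational integer coefficients it is the minimal polynomial of $j(\alpha_Q)$, of degree $\#\Ell_\O(\C)=h(D)$. This is (1), and (2) is then immediate, since the Galois conjugates of $j(\alpha_Q)$ are exactly the roots of $H_D$, namely the singular moduli attached to the $h(D)$ classes in $\calQ_D^{\prim}/\SL_2(\Z)$.

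For part (3) I would invoke the explicit ideal/Artin description from \S\ref{nutsandbolts}: $\cl(\O)$ acts simply transitively on $\Ell_\O(\C)$, this action is realized by $\Gal(K_\O/K)$ through the Artin map, and under the dictionary above it is compatible with the action of the form class group on the values $j(\alpha_Q)$. Hence all discriminant-$D$ singular moduli form a single $\Gal(K_\O/K)$-orbit, i.e.\ they are conjugate over $K$, and $K(j(\alpha_Q))=K_\O$ is the ring class field of $\O$. When $D$ is fundamental, $\O=\O_K$ and $K_\O$ is by definition the Hilbert class field of $K$.

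The hard part, were \S\ref{nutsandbolts} not available, would be the main theorem of complex multiplication itself: that $K_\O$ is the splitting field of $H_D$ and that the class-group action on $j$-invariants is implemented by the Artin map. Since that deep input is exactly what we are permitted to assume, the only genuine work is the form-to-ideal dictionary, and within it the two delicate points are (a) the equivalence of primitivity of $Q$ with invertibility of $\fraka_Q$, and (b) the compatibility needed in part (3), namely checking that the abstract $\cl(\O)$-action of \S\ref{nutsandbolts} is literally the action induced by Gauss composition on the Heegner values $j(\alpha_Q)$. I expect (b) to require the most care, as it is where the forms picture and the ideals/Artin picture must be identified on the nose.
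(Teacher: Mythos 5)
The paper does not actually prove Theorem~\ref{CMtheorem}: it is stated as a classical result with a pointer to the literature (``for example, see \cite{Borel, Cox}''), so there is no in-paper argument to compare yours against. Your sketch is a correct outline of the standard proof found in those references (essentially \cite[\S 7, \S 11]{Cox}), and it meshes cleanly with the facts the paper does recall in \S\ref{nutsandbolts}: the dictionary $Q\mapsto \fraka_Q=a(\Z+\Z\alpha_Q)$ inducing a bijection (indeed a group isomorphism under Gauss composition) $\calQ_D^{\prim}/\SL_2(\Z)\to\cl(\O)$, the identification of the Heegner values with $\Ell_\O(\C)$, and the simply transitive $\Gal(K_\O/K)\cong\cl(\O)$ action giving irreducibility of $H_D$ over $K$ and hence over $\Q$. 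You also correctly isolate where the real content lives: the main theorem of complex multiplication, which is exactly the input the paper takes for granted. One small wording slip: an imprimitive form $Q=gQ'$ of discriminant $D=g^2D'$ gives a lattice whose multiplier ring is the \emph{larger} order of discriminant $D'$ containing $\O$ with index $g$, so $\fraka_Q$ fails to be a proper (invertible) $\O$-ideal because its endomorphism ring is a proper \emph{super}order of $\O$, not a suborder; this is a parenthetical in your argument and does not affect the logic, but the direction of containment should be stated the right way round.
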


Theorem~\ref{CMtheorem} and the properties of the weight 2 nonholomorphic
Eisenstein series $E_2^{*}(z)$ at CM points shall play a central role in the construction of the
algorithms described in the next section. To this end, we make use of the special
nonholomorphic function $\gamma(z)$ defined in~(\ref{gammaz}).

Masser nicely observed that the singular moduli
for $\gamma(z)$ can be computed using the singular moduli for $j(z)$ and certain expressions for modular polynomials.
Here we make this precise for discriminants $D<-4$ that are not of the form $D=-3d^2$.
\begin{remark} Masser explains how to handle discriminants $D=-3d^2$; see p.\ 118 of \cite{Masser}.
\end{remark}

To state his observation, we let $\O$ be the imaginary quadratic order of discriminant $D$, and let $\{Q_1,\ldots,Q_h\}$ be a set of representatives for $\calQ_D^\prim/\SL_2(\Z)\simeq\cl(\O)$, where $h=h(D)$ is the class number.
To simplify notation, we use $\Phi_D$ to denote the classical modular polynomial $\Phi_{|D|}(X,Y)$.
For any $Q=Q_i$ we may write $\Phi_D$ in the form
\begin{equation}\label{eq:phiexp}
\Phi_D(X,Y) = \sum_{0\le \mu,\nu\le n} \beta_{\mu,\nu}\bigl(X-j(\alpha_Q)\bigr)^\mu\bigl(Y-j(\alpha_Q)\bigr)^\nu,
\end{equation}
where $n=\psi(|D|)$ is determined by the Dedekind $\psi$-function
\begin{equation}\label{eq:psi}
\psi(m) := m\prod_{p|m}(1+p^{-1}),
\end{equation}
which satisfies $\psi(m)=O(m\log\log m)$; see \cite{SP}.
The coefficients $\beta_{\mu,\nu}=\beta_{\mu,\nu}(\alpha_Q)$ are algebraic integers that lie in the ring class field $K_\O$, and we have $\beta_{\mu,\nu}=\beta_{\nu,\mu}$ (by the symmetry of $\Phi_D$).
Masser  \cite[p.\ 118]{Masser} gives the following formula for $\gamma(\alpha_Q)$.

\begin{lemma}\label{MasserLemma} Assuming the notation and hypotheses above, we have
\begin{equation}\label{eq:gammabeta}
\gamma(\alpha_Q) = \frac{2\beta_{0,2}(\alpha_Q)-\beta_{1,1}(\alpha_Q)}{\beta_{0,1}(\alpha_Q)}.
\end{equation}
\end{lemma}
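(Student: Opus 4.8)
The plan is to first rewrite $\gamma$ intrinsically, and then to read off both sides of \eqref{eq:gammabeta} from the modular equation differentiated at $\alpha_Q$. Write $\vartheta:=\frac{1}{2\pi i}\frac{d}{dz}$. Using the Ramanujan relations $\vartheta E_4=\tfrac13(E_2E_4-E_6)$, $\vartheta E_6=\tfrac12(E_2E_6-E_4^2)$, $\vartheta\Delta=E_2\Delta$, together with $\vartheta j=-E_4^2E_6/\Delta$, a direct computation gives
\[
-6\,\gamma\,\vartheta j \;=\; E_2^{*}-\frac{3E_4^2}{E_6}-\frac{4E_6}{E_4},
\qquad\text{equivalently}\qquad
\gamma \;=\; -\frac{\vartheta^2 j}{(\vartheta j)^2}+\frac{1}{2\pi\,\im(z)\,\vartheta j}.
\]
The purpose of this step is to isolate the single nonholomorphic term $\tfrac{1}{2\pi\,\im(z)}$, which is exactly the piece distinguishing $E_2^{*}$ from the quasimodular $E_2$; everything else is a rational function of $j$.

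Next I would bring in the CM endomorphism. Let $M=\kzxz{a}{b}{c}{d}$ be the integral matrix of multiplication by $\sqrt{D}\in\O$ on the lattice attached to $\alpha_Q$; it has $\det M=|D|$, $\tr M=0$, fixes $\alpha_Q$, and as a Möbius transformation has multiplier function $\lambda(z)=|D|/(cz+d)^2$. Since $\tr M=0$ forces $(c\alpha_Q+d)^2=-|D|=D<0$, we get $c\alpha_Q+d=i\sqrt{|D|}$, hence $\lambda(\alpha_Q)=-1$ and $\im(\alpha_Q)=\sqrt{|D|}/c$. Setting $X(z)=j(z)$ and $Y(z)=j(Mz)$, we have $X(\alpha_Q)=Y(\alpha_Q)=j(\alpha_Q)$, so the finite Taylor expansion \eqref{eq:phiexp} is precisely the expansion at the relevant base point, and the chain rule gives $\vartheta Y(\alpha_Q)=\lambda(\alpha_Q)\,\vartheta X(\alpha_Q)=-\vartheta X(\alpha_Q)$.

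Now I differentiate the identity $\Phi_{|D|}(X(z),Y(z))\equiv 0$ twice. The first derivative, $\Phi_X\,\vartheta X+\Phi_Y\,\vartheta Y=0$, holds automatically at $\alpha_Q$ by the diagonal symmetry $\Phi_X=\Phi_Y$ combined with $\vartheta Y=-\vartheta X$. The second derivative, after substituting $\vartheta Y=-\vartheta X$ and the symmetries $\Phi_{XX}=\Phi_{YY}$, collapses to
\[
2(\vartheta X)^2\bigl(\Phi_{YY}-\Phi_{XY}\bigr)+\Phi_Y\bigl(\vartheta^2 X+\vartheta^2 Y\bigr)=0 .
\]
Since $2\beta_{0,2}=\Phi_{YY}$, $\beta_{1,1}=\Phi_{XY}$ and $\beta_{0,1}=\Phi_Y$ at $(j(\alpha_Q),j(\alpha_Q))$, this already identifies the right-hand side of \eqref{eq:gammabeta} as $-\dfrac{\vartheta^2 X+\vartheta^2 Y}{2(\vartheta X)^2}$ evaluated at $\alpha_Q$.

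It remains to match this with $\gamma(\alpha_Q)$, and here I expect the only genuine work. Differentiating $Y=j(Mz)$ twice gives $\vartheta^2 Y=\lambda^2\,(\vartheta^2 j)(Mz)+(\vartheta j)(Mz)\,\vartheta\lambda$, and at $\alpha_Q$ one has $\vartheta\lambda=\dfrac{-2|D|\,c}{2\pi i\,(c\alpha_Q+d)^3}$; feeding in $(c\alpha_Q+d)^2=D$ and $\im(\alpha_Q)=\sqrt{|D|}/c$ collapses the correction term to $+\dfrac{1}{2\pi\,\im(\alpha_Q)\,\vartheta j}$, so that
\[
-\frac{\vartheta^2 X+\vartheta^2 Y}{2(\vartheta X)^2}\Big|_{\alpha_Q}
=-\frac{\vartheta^2 j}{(\vartheta j)^2}+\frac{1}{2\pi\,\im(\alpha_Q)\,\vartheta j}
=\gamma(\alpha_Q),
\]
by the rewriting in the first paragraph. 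The main obstacle is precisely this bookkeeping: the term $\vartheta\lambda$ at the fixed point is what turns $E_2$ into $E_2^{*}$, and its coefficient must match the nonholomorphic normalization in $\gamma$; pinning this constant down (via $(c\alpha_Q+d)^2=D$) is the crux. Two further points need care. First, one must justify $\Phi_{|D|}(j(z),j(Mz))\equiv0$, i.e. that $\sqrt D$ induces a cyclic $|D|$-isogeny; this is immediate when $\sqrt D$ is primitive and otherwise requires the mild case analysis indicated by Masser. Second, the denominators must be nonzero, and this is where the hypotheses enter: $D<-4$ and $D\neq-3d^2$ force $j(\alpha_Q)\neq 0,1728$, hence $E_4(\alpha_Q),E_6(\alpha_Q)\neq0$, so that $\vartheta j(\alpha_Q)\neq0$ and $\beta_{0,1}\neq0$.
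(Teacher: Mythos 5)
The paper does not actually prove this lemma: it simply attributes the formula to Masser \cite[p.~118]{Masser}, with a remark citing \cite[Lem.~A2]{Masser} for the nonvanishing of $\beta_{0,1}$. Your proposal therefore cannot coincide with ``the paper's proof''; what you have done is reconstruct Masser's argument, and the reconstruction is correct in all its essentials. I checked the two pivotal identities: using $j-1728=E_6^2/\Delta$ and $7j-6912=3j+4(j-1728)$ one indeed gets $-6\gamma\,\vartheta j=E_2^{*}-3E_4^2/E_6-4E_6/E_4$, and comparing with $\vartheta^2 j=\vartheta j\cdot\bigl(\tfrac{1}{6}E_2-\tfrac{2E_6}{3E_4}-\tfrac{E_4^2}{2E_6}\bigr)$ yields your intrinsic form $\gamma=-\vartheta^2j/(\vartheta j)^2+\bigl(2\pi\,\im(z)\,\vartheta j\bigr)^{-1}$. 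The second-order implicit differentiation of $\Phi_{|D|}(j(z),j(Mz))\equiv 0$ at the fixed point, together with $\lambda(\alpha_Q)=-1$ and $\vartheta\lambda(\alpha_Q)=-1/(\pi\,\im\alpha_Q)$, then gives exactly $(2\beta_{0,2}-\beta_{1,1})/\beta_{0,1}$; the signs and the factor of $2$ in $2\beta_{0,2}=\Phi_{YY}$ all work out. This self-contained derivation is genuinely more informative than the paper's citation.

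Two caveats. First, your final sentence claims that $j(\alpha_Q)\ne 0,1728$ (hence $\vartheta j(\alpha_Q)\ne 0$) implies $\beta_{0,1}\ne 0$; that implication does not follow. Your own second-order relation is consistent with $\Phi_Y=0$ and $\Phi_{YY}=\Phi_{XY}$ simultaneously. The nonvanishing of $\beta_{0,1}$ is equivalent to $j(\alpha_Q)$ being a \emph{simple} root of $\Phi_{|D|}(j(\alpha_Q),Y)$, i.e.\ to the essential uniqueness (up to sign) of the cyclic endomorphism of norm $|D|$, and this is precisely where the hypotheses $D<-4$ and $D\ne-3d^2$ enter through the unit group; it is a separate fact (Masser's Lemma~A2) that your argument must quote or prove. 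Second, the step $\Phi_{|D|}(j(z),j(Mz))\equiv 0$ requires the matrix $M$ of multiplication by $\sqrt{D}$ to be primitive; this holds for odd $D$ (where $\O/(\sqrt D)$ is cyclic because $2$ is invertible mod $|D|$) but fails for even $D$, where $M$ is twice a primitive matrix and the relation involves $\Phi_{|D|/4}$. You flag this as ``mild case analysis,'' which is fair for the paper's application (the discriminants $1-24n$ are odd), but it is worth stating explicitly that the formula as written is being derived for odd $D$.
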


\medskip\noindent
{\it Two remarks.}

\noindent
1) Masser proves (see
 \cite[Lemma A2]{Masser}) that $\beta_{0,1}(\alpha_Q)$ is nonzero.

\smallskip
\noindent 2)
From \eqref{eq:phiexp}, one finds that
\begin{align}\label{eq:betas}\notag
\beta_{0,1}(\alpha_Q) &= [Y]\Phi_D\bigl(j(\alpha_Q),Y+j(\alpha_Q)\bigr),\\
\beta_{1,1}(\alpha_Q) &= [Y]\Phi_D'\bigl(j(\alpha_Q),Y+j(\alpha_Q)\bigr),\\\notag
\beta_{0,2}(\alpha_Q) &= [Y^2]\Phi_D\bigl(j(\alpha_Q),Y+j(\alpha_Q)\bigr),
\end{align}
where $\Phi_D'(X,Y)=\frac{\partial}{\partial X}\Phi_D(X,Y)$, and for any polynomial $f(Y)$, the notation $[Y^k]f(Y)$ indicates the coefficient of $Y^k$ in $f(Y)$.

\section{The Algorithms}\label{alg}
Here we apply and extend the results in \S\ref{nutsandbolts} to derive our algorithms.

\subsection{Algorithm 1}\label{gamma_case}
We now give an algorithm to compute the class polynomial $H_D(\gamma;x)$, where $\gamma(z)$ is the nonholomorphic modular function defined in \eqref{gammaz} and $D$ is an imaginary quadratic discriminant.  In order to simplify the exposition as above, we shall assume $D<-4$ and that $D$ is not of the form $D=-3d^2$; these \emph{special} discriminants are in principle no more difficult to handle than the general case, but the details are more involved; see \cite[p.\ 118]{Masser}.\footnote{We note that the discriminants $D=1-24n$ needed to compute $H_n^\parti(x)$ are not special.}

To make use of Lemma~\ref{MasserLemma}, we need to compute the
singular moduli $j(\alpha_Q)$. These shall be obtained as the roots of the Hilbert class polynomial $H_D(x)$.
Thus  if we know $\Phi_D$ and $H_D$, then we can apply Lemma~\ref{MasserLemma} to compute
\[
H_D(\gamma;x) = \prod_{Q\in\calQ_D^{\prim}/\SL_2(\Z)}\bigl(x-\gamma(\alpha_{Q})\bigr).
\]

Using algorithms for fast multipoint polynomial evaluation and fast integer arithmetic (see \cite{GG}, for example), this yields an algorithm that computes $H_D(\gamma;x)$ in $\softO(|D|^3)$ expected time using $\softO(|D|^3)$ space, under the GRH.  However, this approach is quite memory intensive and quickly becomes impractical, even for moderate values of $D$.
As an alternative, we give a CRT-based algorithm that uses $\softO(|D|^{7/2})$ expected time and $\softO(|D|^2)$ space, under the GRH.\footnote{As remarked in the introduction, we expect this running time can be improved, possibly to $\softO(|D|^{5/2})$, by obtaining tighter bounds on the coefficients of $H_D(\gamma;x)$.}

It is clear from equations \eqref{eq:phiexp} and \eqref{eq:gammabeta} that the coefficients of $H_D(\gamma;x)$ lie in $\Q$; indeed, the coefficients of $\Phi_D$ are integers, as are the elementary symmetric functions of the $j(\alpha_Q)$, which are the coefficients of the Hilbert class polynomial $H_D(x)$. If we let
\begin{equation}
\delta:=\prod_{Q\in\calQ_D^{\prim}/\SL_2(\Z)}\beta_{0,1}(\alpha_Q),
\end{equation}
then $\delta\in\Z$ is divisible by the denominator of every coefficient of $H_D(\gamma;x)$ and $\delta H_D(\gamma;x)\in\Z[x]$.
We now present the algorithm.
\medskip

\noindent
\textbf{Algorithm 1}\\
\textbf{Input:} An imaginary quadratic discriminant $D$ that is not special.\\
\textbf{Output:} The polynomial $H_D(\gamma;x)\in\Q[x]$.\\
\vspace{-8pt}
\begin{enumerate}[1.]
\item Pick an order $\O$ suitable for $|D|$, and a set $S$ of primes suitable for $|D|$ and $\O$ (see Def.~\ref{def:suitablem}),
using the bound $B_\gamma(D)$ given in \eqref{def:Bgamma} below.
\item Compute the Hilbert class polynomial $H_D\in\Z[x]$ using \cite[Alg.\ 2]{Sutherland}.
\item For each prime $p\in S$:
\begin{enumerate}[a.]
\item Compute $\Phi=\Phi_D\bmod p$ using Algorithm 1.1 below.
\item Compute $\Phi'=\frac{\partial}{\partial X}\Phi_D(X,Y)\bmod p$.
\item Compute the roots $j_1,\ldots,j_h\in\Fp$ of $H_D\bmod p$.
\item Compute $\phi_k(Y)=\Phi(j_k,Y)$ and $\phi_k'(Y)=\Phi'(j_k,Y)$ for all $j_k$ using \cite[Alg.\ 10.7]{GG}.
\item For each $j_k$, compute $\beta_{0,1}, \beta_{1,1},$ and $\beta_{0,2}$ using $\phi_k$ and $\phi_k'$ via \eqref{eq:betas},\\
and then compute $\gamma_k=(2\beta_{0,2}-\beta_{1,1})/\beta_{0,1}$.
\item Compute $\delta=\prod_k \beta_{0,1}$ and $f(x)=\delta\prod_k(x-\gamma_k)$.
\item Save $f(x)\bmod p$ and $\delta\bmod p$.
\end{enumerate}
\item Use the CRT to recover $f(x)=\delta H_D(\gamma;x)\in\Z[x]$ and $\delta\in\Z$.
\item Output $H_D(\gamma;x) = \frac{1}{\delta}f(x)\in\Q[x]$.
\end{enumerate}
\medskip

Let $B_\Phi(D)$ denote an upper bound on $\height(\Phi_D)$; when $|D|$ is prime we may use the bound $B_\Phi(D)=6|D|+18|D|\log|D|$ from \eqref{eq:phiheight}, and otherwise we may derive such a bound by expressing $\Phi_D$ in terms of modular polynomials of prime level, as in \cite[Thm.~13.14]{Cox}.
We use
\begin{equation}
M(D):=\log\bigl(\exp(\pi\sqrt{|D|})+2114.567\bigr)
\end{equation}
to bound $\log|j(\alpha_Q)|$, for any $Q\in\calQ_D^\prim$; see \cite[p.~1094]{Enge}, for example.

We now define
\begin{equation}\label{def:Bgamma}
B_\gamma(D) := \bigl(h(D)+1\bigr)\Bigl(4\log\bigl(\psi(|D|)+1\bigr)+2\psi(|D|)M(D)+B_\Phi(D)+2\Bigr).
\end{equation}

\begin{lemma}\label{lemma:gammaheight}
Let $D$ be an imaginary quadratic discriminant that is not special.
Then we have the bounds $\delta \le B_\gamma(D)$ and $\height(\delta H_D(\gamma;x)) \le B_\gamma(D)$, and $\delta=\softO(|D|^{3/2})$.
\end{lemma}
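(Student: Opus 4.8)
The plan is to reduce both the explicit bounds and the asymptotic bound to elementary estimates on the three quantities $\beta_{0,1}(\alpha_Q)$, $\beta_{1,1}(\alpha_Q)$, $\beta_{0,2}(\alpha_Q)$ attached to each class $Q\in\calQ_D^\prim/\SL_2(\Z)$, and then to combine them over the $h=h(D)$ classes. First I would record the factorization that both reconfirms integrality and organizes the height bound: by Lemma~\ref{MasserLemma} we have $\beta_{0,1}(\alpha_Q)\gamma(\alpha_Q)=2\beta_{0,2}(\alpha_Q)-\beta_{1,1}(\alpha_Q)$, so
\[
\delta H_D(\gamma;x) = \prod_{Q}\bigl(\beta_{0,1}(\alpha_Q)\,x - (2\beta_{0,2}(\alpha_Q) - \beta_{1,1}(\alpha_Q))\bigr),
\]
a product of $h$ linear forms whose coefficients are algebraic integers in $K_\O$ permuted by $\Gal(K_\O/\Q)$. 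By \eqref{eq:betas} each $\beta$ is a partial derivative of $\Phi_D$ evaluated at the diagonal point $(j(\alpha_Q),j(\alpha_Q))$, and I would bound all three by brute force, using that $\Phi_D$ has degree $n=\psi(|D|)$ in each variable, coefficients of absolute value at most $e^{B_\Phi(D)}$, and $|j(\alpha_Q)|\le e^{M(D)}$ for every $Q$.

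For the two explicit bounds I would carry out the term-by-term estimate. Writing $A=e^{B_\Phi(D)}$, $J=e^{M(D)}\ge 1$, and $n=\psi(|D|)$, differentiating and evaluating gives $|\beta_{0,1}(\alpha_Q)|\le (n+1)^3 A J^{2n}$ and, likewise, $|2\beta_{0,2}(\alpha_Q)-\beta_{1,1}(\alpha_Q)|\le 2(n+1)^4 A J^{2n}$. Taking logarithms, both are at most the inner factor $I:=4\log(\psi(|D|)+1)+2\psi(|D|)M(D)+B_\Phi(D)+2$ of $B_\gamma(D)=(h+1)I$, the ``$+2$'' absorbing the stray $\log 2$ and $\log(n+1)$ terms. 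Summing $\log|\beta_{0,1}(\alpha_Q)|$ over the $h$ classes yields $\log\delta \le hI \le (h+1)I=B_\gamma(D)$. For the polynomial, the coefficient of $x^k$ in the displayed product is a sum of $\binom hk\le 2^h$ terms, each bounded by $\prod_Q\max\bigl(|\beta_{0,1}(\alpha_Q)|,\,|2\beta_{0,2}(\alpha_Q)-\beta_{1,1}(\alpha_Q)|\bigr)$; hence $\height(\delta H_D(\gamma;x))\le h\log 2 + hI \le (h+1)I = B_\gamma(D)$. This settles the first two assertions.

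The asymptotic estimate $\log\delta=\softO(|D|^{3/2})$ needs a sharper input, since the uniform bound $M(D)\approx\pi\sqrt{|D|}$ applied to all $h$ classes only gives $\log\delta=\softO(|D|^2)$. The point is that $M(D)$ is essentially attained only by the principal form: for a reduced form $[a,b,c]$ one has $\im(\alpha_Q)=\sqrt{|D|}/(2a)$, whence $\log^{+}|j(\alpha_Q)|\le 2\pi\im(\alpha_Q)+O(1)=\pi\sqrt{|D|}/a+O(1)$. Rerunning the estimate of the previous paragraph while retaining $\log^{+}|j(\alpha_Q)|$ in place of $M(D)$ gives $\log|\beta_{0,1}(\alpha_Q)|\le B_\Phi(D)+2n\log^{+}|j(\alpha_Q)|+O(\log n)$, so that
\[
\log\delta \;\le\; h\,B_\Phi(D) + 2\psi(|D|)\sum_{Q}\log^{+}|j(\alpha_Q)| + O\bigl(h\log\psi(|D|)\bigr).
\]
Since $h=\softO(|D|^{1/2})$, $B_\Phi(D)=\softO(|D|)$, and $\psi(|D|)=\softO(|D|)$, the first term is $\softO(|D|^{3/2})$ and it remains only to show $\sum_Q\log^{+}|j(\alpha_Q)|=\softO(|D|^{1/2})$.

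The main obstacle is precisely this last estimate, which is the logarithmic height of the Hilbert class polynomial. By the displayed inequality $\log^{+}|j(\alpha_Q)|\le\pi\sqrt{|D|}/a_Q+O(1)$, it reduces to $\sum_Q 1/a_Q=\softO(1)$, the sum over reduced representatives. I would bound the number of reduced forms with leading coefficient $a$ by the number of $b\bmod 2a$ with $b^2\equiv D\pmod{4a}$, which is $O(2^{\omega(a)})$, and then use $\sum_{a\le\sqrt{|D|/3}}2^{\omega(a)}/a=O(\log^2|D|)$ (writing $2^{\omega(a)}=\sum_{d\mid a}|\mu(d)|$ and interchanging the order of summation). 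This gives $\sum_Q\log^{+}|j(\alpha_Q)|\le\pi\sqrt{|D|}\sum_Q 1/a_Q+O(h)=\softO(|D|^{1/2})$, hence $2\psi(|D|)\sum_Q\log^{+}|j(\alpha_Q)|=\softO(|D|^{3/2})$ and $\log\delta=\softO(|D|^{3/2})$, completing the proof. This height estimate is standard in the literature on computing $H_D$ (it underlies the $\softO(|D|)$ size bound for the Hilbert class polynomial), so I would cite it rather than reprove it; it is the one genuinely nontrivial ingredient, everything else being elementary estimation of the coefficients of $\Phi_D$.
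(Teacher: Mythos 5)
Your treatment of the two explicit inequalities is essentially the paper's own argument: both bound $\beta_{0,1}(\alpha_Q)$ and $2\beta_{0,2}(\alpha_Q)-\beta_{1,1}(\alpha_Q)$ by expanding $\Phi_D\bigl(j(\alpha_Q),Y+j(\alpha_Q)\bigr)$ term by term, using $\height(\Phi_D)\le B_\Phi(D)$ and $\log|j(\alpha_Q)|\le M(D)$, and then multiply over the $h(D)$ classes, with the combinatorial factor $2^{h}$ from expanding the product of linear forms absorbed by the extra $+1$ in $h(D)+1$. Where you genuinely depart from the paper is the asymptotic claim, and your instinct there is correct: the paper asserts that $B_\gamma(D)=\softO(|D|^{3/2})$ ``follows immediately'' from $h(D)=\softO(|D|^{1/2})$, $\psi(|D|)=\softO(|D|)$, $M(D)=\softO(|D|^{1/2})$ and $B_\Phi(D)=\softO(|D|)$, but the term $(h(D)+1)\cdot 2\psi(|D|)M(D)$ in \eqref{def:Bgamma} is then $\softO(|D|^{2})$, not $\softO(|D|^{3/2})$, so the paper's own deduction does not close. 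Your refinement --- replacing the uniform bound $M(D)$ by $\log^{+}|j(\alpha_Q)|\le \pi\sqrt{|D|}/a_Q+O(1)$ for each reduced representative and invoking $\sum_Q 1/a_Q=O(\log^2|D|)$, i.e.\ the standard $\softO(|D|^{1/2})$ bound on the height of the Hilbert class polynomial --- is exactly what is needed to obtain $\log\delta=\softO(|D|^{3/2})$, and citing that estimate is legitimate since it underlies the $\softO(|D|)$ size bound for $H_D$ in \cite{Sutherland}. One caveat worth recording: your argument bounds $\log\delta$ and $\height(\delta H_D(\gamma;x))$ by $\softO(|D|^{3/2})$, but not the quantity $B_\gamma(D)$ as literally defined in \eqref{def:Bgamma}, which remains $\softO(|D|^{2})$; since Algorithm~1 sizes its prime set $S$ by $B_\gamma(D)$, the $\softO(|D|^{7/2})$ running time claimed in Theorem~\ref{gamma_thm} requires feeding step~1 the sharpened explicit bound implicit in your computation (for instance with $h(D)M(D)$ replaced by an explicit bound on $\height(H_D)$ as in \cite[Lemma~8]{Sutherland}), rather than $B_\gamma(D)$ itself.
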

\begin{proof}
For any $Q\in\calQ_D^\prim$, each coefficient $c$ of the univariate polynomial $\phi(Y)=\Phi_D(j(\alpha_Q),Y)$ is a polynomial of degree $\psi(|D|)$ in $j(\alpha_Q)$ with coefficients bounded by $\height(\Phi_D)\le B_\Phi(D)$.
It follows that
\begin{equation}\label{eq:gh1}
\height(\phi)\le \psi(|D|)M(D)+B_\Phi(D)+\log(\psi(|D|)+1).
\end{equation}
From \eqref{eq:betas}, we know that $\beta_{0,1}(\alpha_Q)$ is the linear coefficient of $\phi(Y+j(\alpha_Q))$, where $\phi(Y)$ has degree $\psi(|D|)$, and this implies
\begin{equation}\label{eq:gh2}
\log|\beta_{0,1}(\alpha_Q)|\le 2\log(\psi(|D|+1)+\height(\phi)+\psi(|D|)M
\end{equation}
for all $Q\in\calQ_D^\prim$.
Substituting \eqref{eq:gh1} into \ref{eq:gh2} and applying the bound to each of the $h(D)$ factors in the product $\delta=\prod_Q\beta_{0,1}(\alpha_Q)$ yields
\begin{equation}\label{eq:gh3}
\log|\delta|\le h(D)\bigl(3\log(\psi(|D|)+1)+2\psi(|D|)M(D)+B_\Phi(D)\bigr) \le B_\gamma(D).
\end{equation}

A calculation completely analogous to that used in \eqref{eq:gh2} yields
\begin{equation}\label{eq:gh4}
\log|2\beta_{0,2}(\alpha_Q)+\beta_{1,1}(\alpha_Q)|\le 3\log(\psi(|D|+1)+\height(\phi)+\psi(|D|)M + 2,
\end{equation}
for all $Q\in\calQ_D^\prim$.
The absolute values of the numerators of the coefficients of $H_D(\gamma;x)$, which has degree $h(D)$, have logarithms that exceed the bound in \eqref{eq:gh4} by at most $\log 2^{h(D)}$.  Combining this with \eqref{eq:gh3} yields
\begin{equation}
\height(\delta H_D(\gamma;x)) \le \log|\delta| + 3\log(\psi(|D|+1)+\height(\phi)+\psi(|D|)M + +h(D)\log 2 + 2,
\end{equation}
and it is then easy to check that plugging \eqref{eq:gh1} and \eqref{eq:gh3} into the RHS yields an expression that is bounded by $B_\gamma(D)$ as defined in \eqref{def:Bgamma}.  The asymptotic bound $B_\gamma(D)=\softO(|D|^{3/2})$ follows immediately from the bounds $h(D)=\softO(|D|^{1/2})$, $\psi(|D|)=\softO(|D|)$, $M(D)=\softO(|D|^{1/2})$ and $B_\Phi(D)=\softO(|D|)$.
\end{proof}

For an odd prime $\ell$, given a suitable order $\O$ and a suitable prime $p$,
the isogeny volcano algorithm of \cite[Alg.\ 2.1]{BLS} computes $\Phi_\ell\bmod p$ in $\softO(\ell^2)$ time, provided that $\log p = O(\log \ell)$.  Here we extend this result to any integer $m > 1$.
We first note that
\begin{align*}
\Phi_2(X,Y)= &X^3+Y^3-X^2Y^2+1488(X^2Y+XY^2)-162000(X^2+Y^2)\\
&+40773375XY+8748000000(X+Y)-157464000000000,
\end{align*}
and extend Definition \ref{def:suitable} to composite integers $m$.

\begin{definition}\label{def:suitablem}
Let $m>1$ be an integer, let $\ell$ be the largest prime divisor of $m$.
An imaginary quadratic order $\O$ is said to be \emph{suitable} for $m$ if $\psi(m)+1\le h(\O)\le 3\psi(m)$ and $\ell\ndiv [\O_K:\O]$.
A prime $p$ is \emph{suitable} for $m$ and~$\O$ if $p\equiv 1\bmod \ell$ and $4p=t^2- \ell^2v^2\disc(\O)$ for some $t,v\in\Z$ with $\ell\ndiv v$.
\end{definition}
\medskip

\noindent
\textbf{Algorithm 1.1}\\
\textbf{Input:} An integer $m>1$, an order $\O$ suitable for $m$, and a prime $p$ suitable for $m$ and $\O$.\\
\textbf{Output:} The modular polynomial $\Phi_m\bmod p$.\\
\vspace{-8pt}
\begin{enumerate}[1.]
\item If $m=2$ then output $\Phi_2\bmod p$ and terminate.
\item If $m$ is an odd prime then compute $\Phi_m\bmod p$ via \cite[Alg.~2.1]{BLS} and terminate.
\item Compute $\Phi_\ell$ for each prime $\ell\le \sqrt{m}$ dividing $m$.
\item Compute the Hilbert class polynomial $H_D$, where $D=\disc(\O)$, via \cite[Alg~2]{Sutherland}.
\item Compute the roots $j_1,\ldots,j_h\in\Fp$ of $H_D \bmod p$ and let $S=\{j_1,\ldots,j_n\}$, where $n=\psi(m)+1$.
\item If $m$ has a prime divisor $\ell_0>\sqrt{m}$ then compute $n$ sets of $j$-invariants $S_1^0,\ldots,S_n^0$,
where $S_i^0=\{\jt\in\Fp:\Phi_{\ell_0}(j_i,\jt)=0\}$, using the isogeny volcano method.\\
Otherwise, let $\ell_0=1$, and let $S_i^0=\{j_i\}$ for $1\le i\le n$.
\item Let $\ell_1\le \ell_2\le \cdots\le\ell_r$ be the primes whose product is $m/\ell_0$.\\
For $1\le d\le r$ do the following:
\begin{enumerate}[a.]
\item If $\ell_d\ne\ell_{d-1}$ then compute $S_i^d=\{\jt\in\Fp:\Phi_{\ell_i}(j_k,\jt) \text{ for some $j_k\in S_i^{d-1}$}\}$ for $1\le i\le n$.
\item If $\ell_d=\ell_{d-1}$ then compute $S_i^d=\{\jt\in\Fp:\Phi_{\ell_i}(j_k,\jt) \text{ for some $j_k\in S_i^{d-1}\backslash S_i^{d-2}$}\}$ for $1\le i\le n$.
\end{enumerate}
\item For $1\le i\le n$ compute $\phi_i(X)=\Phi_m(X,j_i)=\sum_{k=0}^{\psi(m)} a_{ik}X^k$ as the product $\prod_{\jt\in S_i^r}(X-\jt)$.
\item For $0\le k\le n$ interpolate the polynomial $f_k(X)$ of degree less than $n$ for which $f_k(j_i)=a_{ik}$.
\item Output $\Phi_m(X,Y) = \sum_{k=0}^{\psi(m)} f_kY^k\bmod p$.
\end{enumerate}
\medskip

\noindent
Note that step 6 does not use $\Phi_{\ell_0}$ to compute $S_i^0$, it uses the isogeny volcano method detailed in \cite[\S 6]{BLS},
whereas step 7 uses the (smaller) polynomials $\Phi_\ell$ computed in step 3.  When computing $\Phi_m\bmod p$ for many primes $p$ (as in Algorithm~1), the polynomials $H_D\in\Z[x]$ and $\Phi_\ell\in\Z[X,Y]$ computed in steps 3 and 4 may be computed just once and reused, since they do not depend on~$p$.

\begin{lemma}\label{lemma:alg11}
Algorithm~1.1 correctly computes $\Phi_m(X,Y)\bmod p$.
Under the GRH, its expected running time is $\softO(m^2)$, provided that $\log p = O(\log m)$.
\end{lemma}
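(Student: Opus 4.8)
The plan is to establish correctness and the running-time bound separately, treating the composite-$m$ case as the main content (the cases $m=2$ and $m$ an odd prime are handled by steps 1--2 and reduce immediately to the cited results). For correctness, the key idea is that the algorithm reconstructs $\Phi_m(X,Y)$ from the univariate specializations $\phi_i(X)=\Phi_m(X,j_i)$ at the $n=\psi(m)+1$ distinct values $j_i\in\Fp$. Since $\Phi_m(X,Y)$ has degree $\psi(m)$ in $Y$, knowing $\Phi_m(X,j_i)$ at $\psi(m)+1$ distinct points $j_i$ determines each coefficient $f_k(X)=[Y^k]\Phi_m(X,Y)$ uniquely by Lagrange interpolation, which is exactly what steps 8--9 perform. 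So the crux is to verify that $\phi_i(X)=\prod_{\jt\in S_i^r}(X-\jt)$ really equals $\Phi_m(X,j_i)$. First I would recall that the roots of $\Phi_m(X,j_i)$ are precisely the $j$-invariants $\jt$ of curves related to $j_i$ by a cyclic $m$-isogeny, and that these are in bijection with the $\psi(m)$ cyclic subgroups of order $m$. The set $S_i^r$ is built up multiplicatively: writing $m/\ell_0=\prod_d\ell_d$ and $m=\ell_0\cdot\prod_d\ell_d$, each stage applies $\Phi_{\ell_d}$ to pass from $\ell_1\cdots\ell_{d-1}$-isogenous curves to $\ell_1\cdots\ell_d$-isogenous curves.

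The delicate point in correctness is ensuring that $S_i^r$ counts each cyclic $m$-isogenous $j$-invariant with the correct multiplicity and, in particular, that composing isogenies of prime degree yields \emph{cyclic} isogenies of the composite degree rather than redundant non-cyclic ones. This is why step 7 distinguishes the cases $\ell_d\ne\ell_{d-1}$ and $\ell_d=\ell_{d-1}$: when the same prime $\ell$ is applied twice in succession, the naive neighbor set would include the curve one ``came from'' (the dual isogeny direction), producing an $\ell^2$-isogeny that factors through a non-cyclic kernel. Excluding $S_i^{d-2}$ in step 7b removes exactly these backtracking vertices, so that the surviving compositions correspond to cyclic $\ell^2$-isogenies. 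I would verify by a local analysis at each prime power $\ell^e\,\|\,m$ that the resulting neighbor count matches $\psi(\ell^e)=\ell^{e-1}(\ell+1)$, and then invoke multiplicativity of $\psi$ and the Chinese Remainder decomposition of cyclic subgroups of order $m$ to conclude that $|S_i^r|=\psi(m)$ with the correct $j$-invariants. The use of the isogeny volcano method in step 6 to compute $S_i^0$ directly (for a single large prime $\ell_0>\sqrt m$, avoiding the need to store $\Phi_{\ell_0}$) is justified by the volcano structure described in \S\ref{nutsandbolts}, where, for $\O$ and $p$ suitable, the neighbors of $j_i\in\Ell_\O(\Fp)$ under $\ell_0$-isogeny are read off from the surface/floor of the volcano.

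For the running-time bound, I would account for each step under the GRH with $\log p=O(\log m)$, so that every element of $\Fp$ occupies $\softO(1)$ space and arithmetic in $\Fp$ costs $\softO(1)$. Step 3 computes $\Phi_\ell$ for primes $\ell\le\sqrt m$, each of size $\softO(\ell^2)=\softO(m)$ via \cite[Alg.~2.1]{BLS}, contributing $\softO(m)$ per prime and $\softO(m)$ total after summing (and these are reused across primes $p$, so their cost is amortized away). Step 4 computes $H_D$ in $\softO(|D|)=\softO(\psi(m))=\softO(m)$ time via \cite[Alg.~2]{Sutherland}, since $\O$ is suitable and hence $h(\O)=\softO(\psi(m))$. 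The core cost lies in steps 5--8: there are $n=\psi(m)+1=\softO(m)$ base points $j_i$, and for each we grow the neighbor set $S_i^\bullet$ to size $\psi(m)=\softO(m)$; step 6 invokes the volcano method $n$ times at $\softO(m)$ cost each, and the prime-by-prime expansions in step 7 together perform $\softO(m)$ root-findings of degree-$O(\ell_d)$ polynomials per base point. Forming the products in step 8 costs $\softO(m)$ per $i$ using subproduct-tree multiplication \cite{GG}, and the $n=\softO(m)$ interpolations in step 9, each over $n$ points, cost $\softO(m)$ apiece. Summing over the $\softO(m)$ values of $i$ gives $\softO(m^2)$ overall, matching the claimed bound. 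The main obstacle I anticipate is not the asymptotic bookkeeping but the multiplicity argument in the correctness proof: pinning down precisely why the backtracking exclusion in step 7b produces exactly the cyclic $\ell^e$-isogenous neighbors, with no double-counting and no omission, requires a careful count of subgroups at each prime-power level and a clean statement that the whole construction respects the factorization $\psi(m)=\prod_{\ell^e\|m}\psi(\ell^e)$.
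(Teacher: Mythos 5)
Your proposal follows essentially the same route as the paper: correctness via the fact that every separable cyclic $m$-isogeny factors into cyclic isogenies of prime degree (so that $\prod_{\jt\in S_i^r}(X-\jt)=\Phi_m(X,j_i)$) together with interpolation at the $\psi(m)+1$ points $j_i$, and the running time by a step-by-step accounting; your discussion of why step 7b must exclude $S_i^{d-2}$ to kill the non-cyclic (backtracking) compositions is actually more explicit than the paper's one-line appeal to the factorization of cyclic isogenies. One slip worth correcting: suitability forces $h(\O)=\Theta(\psi(m))$, hence $|D|=\softO(m^2)$ rather than $\softO(m)$ (your equation $|D|=\softO(\psi(m))$ would give only $\softO(m^{1/2})$ roots of $H_D$, not enough to interpolate), so computing $H_D$ costs $\softO(m^2)$, and likewise computing the $\Phi_\ell$ over $\Z$ in step 3 costs $\softO(\ell^3)=\softO(m^{3/2})$ rather than $\softO(m)$; both corrections still fit within the claimed $\softO(m^2)$ bound, so the conclusion stands.
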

\begin{proof}
For each of the $j$-invariants $j_i\in S$, the set $S_i^d$ contains all the $j$-invariants $\jt$ for which $\Phi_{m_d}(j_i,\jt)=0$, where $m_d = \prod_{i=0}^d \ell_d$.  This follows from the defining property of $\Phi_m(X,Y)$ (it parameterizes cyclic $m$-isogenies) and the fact that every (separable) cyclic isogeny can be expressed as a product of cyclic isogenies of prime degree (note that $p$ is distinct from all the $\ell_i$, since $p\equiv 1\bmod \ell_i$).  Thus we have $\phi_i(X)=\Phi_m(X,j_i)$ in step 8, and the $n=\psi(m)+1$ distinct values of $j_i\in S$ are sufficient to uniquely determine the coefficients of $\Phi_m\bmod p$ in step 9.

We now bound the complexity of each step, assuming the GRH and that $\log p = O(\log m)$.  Step 1 takes $\softO(1)$ time and step 2 takes $\softO(m^2)$ time, by \cite[Thm.\ 6.5]{BLS}.   Computing $O(\log m)$ modular polynomials $\Phi_\ell$ with $\ell\le \sqrt{m}$ in step 3 takes $\softO(m^{3/2})$ expected time, by \cite[Thm.\ 1]{BLS}.
We have $h=h(D)=O(m)$, since $\O$ is suitable for $m$, and since $h(D)=\softO(|D|^{1/2})$, we have $|D|=\softO(m^2)$ and this bounds the cost of step 3, by \cite[Thm.\ 1]{Sutherland}.
Using standard probabilistic algorithms for root-finding, step 5 takes $\softO(h)$ expected time, which is $\softO(m)$.

The cost of step 6 is bounded by $\softO(h\ell_0 + \ell_0^2)$, which is $\softO(m^2)$; this follows from the proof of \cite[Thm.\ 6.5]{BLS}.
Step 7 consists of root-finding operations in $\Fp$ whose expected complexity is softly-linear in the number of roots, ignoring factors of $\log p$ (every polynomial under consideration splits completely $\Fp[x]$ by virtue of the suitability of $\O$ and $p$).
The total number of roots computed in step 7 is $O(\psi(m)^2)$, hence the total cost is $\softO(m^2)$ expected time.

Using standard algorithms for fast arithmetic and polynomial interpolation \cite{GG}, the cost of steps 8 and 9 are both bounded by $\softO(\psi(m)^2)$, which is $\softO(m^2)$.  Thus every step has an expected running time bounded by $\softO(m^2)$.
\end{proof}

\begin{corollary}
Under the GRH, for any integer $m>1$ the modular polynomial $\Phi_m$ can be computed in $\softO(m^3)$ expected time.
\end{corollary}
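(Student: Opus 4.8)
The plan is to compute $\Phi_m\bmod p$ for a suitable collection $S$ of primes and then recover $\Phi_m$ over $\Z$ via the CRT, exactly as in Algorithm~1.1 combined with the prime-selection machinery of \S\ref{sec:primes}. The ingredients are already in place: Lemma~\ref{lemma:alg11} gives us $\Phi_m\bmod p$ in $\softO(m^2)$ expected time per suitable prime $p$ (provided $\log p=O(\log m)$), the height bound \eqref{eq:phiheight} controls the size of the coefficients of $\Phi_m$, and the discussion of \S\ref{sec:primes} lets us assemble $S$ under the GRH. So the corollary should fall out by a straightforward counting of primes and a cost accounting.

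First I would fix an order $\O$ suitable for $m$ in the sense of Definition~\ref{def:suitablem}; since $h(\O)=O(\psi(m))=O(m\log\log m)$ we have $|\disc(\O)|=\softO(m^2)$, and such an order can be found without disturbing the asymptotics. Next I would invoke \S\ref{sec:primes}: set $B=B_\Phi(m)$, the height bound on the coefficients of $\Phi_m$, which by \eqref{eq:phiheight} (and, for composite $m$, the reduction to prime-level polynomials via \cite[Thm.~13.14]{Cox}) satisfies $B=\softO(m)$. Running Algorithm~6.2 of \cite{BLS} produces a set $S$ of primes suitable for $m$ and $\O$ with $\sum_{p\in S}\log p\ge B+\log 2$; under the GRH this set contains $O(B/\log B)=\softO(m)$ primes, each of size $\log p=O(\log B+\log\ell)=O(\log m)$, so the hypothesis $\log p=O(\log m)$ of Lemma~\ref{lemma:alg11} is met for every $p\in S$.

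I would then run Algorithm~1.1 once for each $p\in S$ to obtain $\Phi_m\bmod p$, and combine the results with the CRT (or the explicit CRT) to recover the integer coefficients of $\Phi_m$. By Lemma~\ref{lemma:alg11} each of the $\softO(m)$ invocations costs $\softO(m^2)$ expected time, giving $\softO(m^3)$ for the bulk of the computation. It is worth noting that the order-dependent precomputations---the Hilbert class polynomial $H_D$ and the small prime-level polynomials $\Phi_\ell$ with $\ell\le\sqrt m$ used in steps 3 and 4 of Algorithm~1.1---do not depend on $p$ and so are performed just once, at cost $\softO(m^2)$ and $\softO(m^{3/2})$ respectively; these are absorbed into the total. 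The final CRT reconstruction handles $O(\psi(m)^2)=\softO(m^2)$ coefficients, each from $\softO(m)$ residues, which again costs $\softO(m^3)$.

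The only point requiring genuine care is the prime-selection step, and this is where the GRH is used: without it we cannot guarantee that $S$ is found in the claimed time or that the primes stay polynomially small in $m$. Granting the conditional bound \cite[Thm.~4.4]{BLS}, there are far more than the $\softO(m)$ suitable primes we need below the threshold $x=\softO(\ell^6\log^4\ell)=\softO(m^6)$, so $S$ is constructed in $\softO(m)$ expected time with $\log p=O(\log m)$ throughout. Everything else is routine bookkeeping with fast integer and polynomial arithmetic \cite{GG}, and summing the per-step costs yields the total expected running time $\softO(m^3)$.
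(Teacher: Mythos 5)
Your proposal is correct and follows essentially the same route as the paper: derive the $\softO(m)$ height bound for $\Phi_m$ from \eqref{eq:phiheight} and \cite[Thm.~13.14]{Cox}, select a suitable order and a set of $\softO(m)$ suitable primes via \cite[Alg.~6.2]{BLS} under the GRH, invoke Lemma~\ref{lemma:alg11} at cost $\softO(m^2)$ per prime, and finish with fast Chinese remaindering. The only difference is that you spell out the cost accounting in more detail than the paper does, which is harmless.
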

\begin{proof}
An explicit $\softO(m)$ bound on the height of $\Phi_m$ can be derived from \cite[Prop.~13.14]{Cox} using the height bounds for $\Phi_\ell$ for primes $\ell|m$ given in \eqref{eq:phiheight}.
An order $\O$ suitable for $m$ can be obtained from the family of suitable orders given in \cite[Ex.~4.3]{BLS} and a sufficiently large set $S$ of primes $p$ suitable for $m$ and $\O$ can be selected using \cite[Alg.~6.2]{BLS}.
Under the GRH, these primes satisfy $\log p=O(\log m)$ and the corollary then follows from Lemma~\ref{lemma:alg11} and standard bounds on the time for fast Chinese remaindering \cite[\S10.3]{GG}.
\end{proof}

\begin{remark}
An algorithm to compute $\Phi_m$ using floating point approximations appears in \cite{Enge2} with a running time that is also $\tilde{O}(m^3)$, but the correctness of this algorithm and the bound on its running time both depend on a heuristic assumption regarding the precision needed to avoid rounding errors.  We note that Algorithm 1.2 is faster in practice, its output is provably correct, and the bound on its expected running time depends only on the GRH.
\end{remark}

\begin{corollary}
Let $D$ be an imaginary quadratic discriminant that is not special, let $\O$ be an order suitable for $|D|$ with $h(D)=O(|D|)$, and let $p$ be a prime suitable for $|D|$ and $\O$.
Under the GRH, the polynomial $H_D(\gamma;x)\bmod p$ can be computed in $\softO(|D|^2)$ expected time.
\end{corollary}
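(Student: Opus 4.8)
The plan is to observe that computing $H_D(\gamma;x)\bmod p$ for a single prime $p$ amounts to executing steps 3(a)--(g) of Algorithm~1 (followed by one scaling), and to bound each of these steps by $\softO(|D|^2)$. The dominant cost is step~(a): computing $\Phi_D=\Phi_{|D|}\bmod p$. I would apply Lemma~\ref{lemma:alg11} with $m=|D|$, whose hypotheses are met since $\O$ is suitable for $|D|$ and $p$ is suitable for $|D|$ and $\O$ in the sense of Definition~\ref{def:suitablem}. To invoke the GRH running-time bound I must also check $\log p=O(\log|D|)$; this follows from the prime-selection analysis of \S\ref{sec:primes}, using $\log B_\gamma(D)=O(\log|D|)$ from Lemma~\ref{lemma:gammaheight} together with $\log\ell=O(\log|D|)$ for the largest prime divisor $\ell$ of $|D|$. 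Lemma~\ref{lemma:alg11} then gives $\softO(|D|^2)$ for step~(a), and the resulting $\Phi$ (and its $X$-derivative $\Phi'$ in step~(b)) are bivariate polynomials with $\psi(|D|)^2=\softO(|D|^2)$ coefficients.

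Next I would check that the remaining steps stay within this budget using standard fast-arithmetic bounds (\cite{GG}). Reducing the precomputed $H_D$ modulo $p$ and finding its roots $j_1,\dots,j_{h(D)}\in\Fp$ (step~(c)) costs $\softO(h(D))=\softO(|D|^{1/2})$, since $h(D)=\softO(|D|^{1/2})$. The multipoint evaluations of the $\softO(|D|)$ coefficient polynomials of $\Phi$ and $\Phi'$, each of degree $\psi(|D|)=\softO(|D|)$, at the $h(D)$ points $j_k$ (step~(d)) cost $\softO(|D|^2)$; extracting from \eqref{eq:betas} the low-order coefficients $\beta_{0,1},\beta_{1,1},\beta_{0,2}$ of the shifts $\phi_k(Y+j_k)$, $\phi_k'(Y+j_k)$ (step~(e)) amounts to evaluating each of $\phi_k,\phi_k',\phi_k''$ at $j_k$, at cost $\softO(\psi(|D|))$ per $k$. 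Forming the $\gamma_k$, the product $\delta$, and the degree-$h(D)$ polynomial $f(x)=\delta\prod_k(x-\gamma_k)$ (steps~(f)--(g)) is negligible by comparison. Summing, every step is $\softO(|D|^2)$.

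Correctness I would reduce directly to Masser's Lemma~\ref{MasserLemma}: for each form $Q$ one has $\gamma(\alpha_Q)=(2\beta_{0,2}(\alpha_Q)-\beta_{1,1}(\alpha_Q))/\beta_{0,1}(\alpha_Q)$, and by \eqref{eq:betas} each $\beta$ is a polynomial in the coefficients of $\Phi_D$ and in $j(\alpha_Q)$; since all of these are computed exactly in $\Fp$ from $\Phi_D\bmod p$ and a root $j_k\equiv j(\alpha_Q)$ of $H_D$, the $\gamma_k$ are exactly the reductions of the $\gamma(\alpha_Q)$, whence $f(x)\equiv\delta H_D(\gamma;x)\bmod p$ and a final multiplication by $\delta^{-1}\bmod p$ yields $H_D(\gamma;x)\bmod p$. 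The one subtlety to record is the division by $\beta_{0,1}$: Masser's first remark gives $\beta_{0,1}(\alpha_Q)\ne0$, hence $\delta\ne0$ in $\Z$, and one restricts to primes $p\nmid\delta$ (only finitely many are excluded) so that $\delta^{-1}\bmod p$ exists. I expect the only real obstacle to be verifying the hypothesis $\log p=O(\log|D|)$ of Lemma~\ref{lemma:alg11}; everything else is bookkeeping over the already-established subroutines.
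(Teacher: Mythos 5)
Your proposal is correct and follows essentially the same route as the paper, whose entire proof is the single line ``Apply Lemma~\ref{lemma:alg11} to step 3 of Algorithm~1.'' You simply carry out in detail what that one line leaves implicit (the per-step cost accounting, the hypothesis $\log p=O(\log|D|)$, and the need for $p\nmid\delta$ when inverting $\delta$ modulo $p$), all of which is consistent with, and somewhat more careful than, the paper's own treatment.
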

\begin{proof}
Apply Lemma~\ref{lemma:alg11} to step 3 of Algorithm~1.
\end{proof}

We now prove Theorem~\ref{gamma_thm} given in the introduction, which we restate here.

\begin{gamma_thm}
Algorithm~1 computes $H_D(\gamma;x)$.
Under the GRH, its expected running time is $\softO(|D|^{7/2})$ and it uses $\softO(|D|^2)$ space.
\end{gamma_thm}
\begin{proof}
The correctness of Algorithm~1 follows from the discussion preceding the algorithm, which shows how to compute $H_D(\gamma;x)$ in terms of $\Phi_D$ and the Hilbert class polynomial $H_D$, the correctness of the algorithm used to compute $H_D$ \cite[Thm.\ 1]{Sutherland}, the correctness of Algorithm~1.1 used to compute $\Phi_D\bmod p$ (Lemma~\ref{lemma:alg11}), and the validity of the bound $B_\gamma(D)$ on the logarithmic height of $H_D(\gamma;x)$ (Lemma~\ref{lemma:gammaheight}).

For the time and space bounds, we now assume the GRH.  We first note that, as explained in \S\ref{sec:primes}, we can select the set of primes $S$ in step 1 in $O(B_\gamma(D)^{1+\epsilon})$ time, which is $O(|D|^{3/2+\epsilon})$ for any $\epsilon>0$, since $B_\gamma(D)$ is $\softO(|D|^{3/2})$, by Lemma~\ref{lemma:gammaheight}, and the primes $p\in S$ all satisfy $\log p=O(\log |D|)$.
The time to compute the Hilbert class polynomial $H_D$ in step 2 is $\softO(|D|)$, by \cite[Thm.\ 1]{Sutherland}, and its size is $\softO(|D|)$.
The set $S$ has cardinality $O(B_\gamma(D))$, which is $\softO(|D|^{3/2})$, and each iteration of step 3 takes $\softO(|D|^2)$ expected time: this follows from Lemma~\ref{lemma:alg11}, the time for fast multipoint polynomial evaluation \cite[Cor.\ 10.8]{GG}, and standard bounds on the complexity of fast arithmetic in $\Fp[x]$.  Thus the total expected time for step 3 is $\softO(|D|^{7/2})$.

The space used in each iteration of step 3 must be bounded by $\softO(|D|^2)$, since this bounds the time, and the total size of the values $f(x)\bmod p$ and $\delta\bmod p$ saved is bounded by $O(h(D)B_\gamma(D))$, which is $\softO(|D|^2)$.
Finally, with fast Chinese remaindering \cite[Alg.\ 10.22]{GG}, the cost of step 4 is softly-linear in the total size of the coefficients of $\delta H_D(\gamma;x)$ and $\delta$, which is $\softO(|D|^2)$.
\end{proof}

\subsection{Algorithm~2}\label{good_case}

We now give an algorithm to compute the class polynomial $H_D(F;x)$ for a good modular function $F(z)=\sum A_n(z)\gamma(z)^n$, as defined in the introduction.
We assume that each $A_n(z)$ is written in the form $A_n(z)=r_n(j(z))$, where $r_n\in\Z(x)$.
\medskip

\noindent
\textbf{Algorithm~2}\\
\textbf{Input:} An imaginary quadratic discriminant $D$ that is not special.\\
\textbf{Output:} The polynomial $H_D(F;x)\in\Q[x]$.\\
\vspace{-8pt}
\begin{enumerate}[1.]
\item Pick an order $\O$ lying in the order of discriminant $D$ that is also suitable for  $|D|$, and a set $S$ of primes suitable for $|D|$ and $\O$ (see Def.~\ref{def:suitablem}) such that no prime in $S$ divides the denominator of any of the $r_n(x)$,
using the height bound $B_F(D)$ (discussed below).
\item Compute the Hilbert class polynomial $H_D\in\Z[x]$ using \cite[Alg.\ 2]{Sutherland}.
\item For each prime $p\in S$:
\begin{enumerate}[a.]
\item Compute $\Phi_D\bmod p$ using Algorithm 1.1.
\item Compute the roots $j_1,\ldots,j_h\in\Fp$ of $H_D\bmod p$.
\item For each $j_k$ do the following:
\begin{enumerate}[i.]
\item Compute $\gamma_k=(2\beta_{0,2}-\beta_{1,1})/\beta_{0,1}\bmod p$ as in Algorithm 1.
\item Compute $F_k = \sum r_n(j_k)\gamma_k^n\bmod p$.
\end{enumerate}
\item Compute $f(x)=c_1|D|^{c_2h}\prod_k(x-F_k)\bmod p$.
\item Save $f(x)\bmod p$.
\end{enumerate}
\item Use the CRT to recover $f(x)=c_1|D|^{c_2h(D)} H_D(F;x)\in\Z[x]$.
\item Output $H_D(F;x) = \frac{1}{c_1}|D|^{-c_2h(D)}f(x)\in\Q[x]$.
\end{enumerate}
\medskip

The bound $B_F(D)$ used in step 1 is an upper bound on $\height\left(c_1|D|^{c_2h(D)}H_D(F;x)\right)$, which the next result shows
is  $\softO(|D|^{1/2})$.
Explicit computation of $B_F(D)$ depends on the particular functions $A_n(z)$;  bounds on the heights of the class polynomials $H_D(A_n;x)$ can be readily derived from the functions $r_n(x)$ and known bounds on the height of the Hilbert class polynomial $H_D$; see Lemma~8 in \cite{Sutherland}, for example.
From these, one can derive an explicit bound $B_F(D)$ on the height of $H_D(F;x)$; see Lemma~\ref{lemma:partht} in the next section for an example.
In general, the following lemma gives us an asymptotic bound for $B_F(D)$ that suffices to bound the complexity of Algorithm~2.

\begin{lemma}\label{lemma:alg2ht}
For all non-special imaginary quadratic discriminants $D$ we have
\[
\height\left(c_1|D|^{c_2h(D)}H_D(F;x)\right) = \softO(|D|^{1/2}).
\]
\end{lemma}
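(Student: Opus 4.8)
The plan is to bound the height of the integer polynomial $c_1|D|^{c_2h}H_D(F;x)$ by combining the small degree $h=h(D)=\softO(|D|^{1/2})$ with an archimedean bound on the values $F(\alpha_Q)$. Writing $H_D(F;x)=\prod_Q(x-F(\alpha_Q))$, each integer coefficient is $c_1|D|^{c_2h}$ times a signed elementary symmetric function of the $F(\alpha_Q)$, so
\begin{equation}
\height\!\left(c_1|D|^{c_2h}H_D(F;x)\right)\le \log c_1 + c_2h\log|D| + h\log 2 + \sum_{Q\in\calQ_D^{\prim}/\SL_2(\Z)}\log^+|F(\alpha_Q)|.
\end{equation}
Since $h=\softO(|D|^{1/2})$, the first three terms are already $\softO(|D|^{1/2})$, so everything reduces to proving $\sum_Q\log^+|F(\alpha_Q)|=\softO(|D|^{1/2})$. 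The essential quantitative input here is the standard estimate $\sum_Q \im(\alpha_Q)=\tfrac12\sqrt{|D|}\sum_Q a_Q^{-1}=\softO(|D|^{1/2})$ (equivalently $\sum_Q a_Q^{-1}=\softO(1)$), which is exactly what underlies $\height(H_D)=\softO(|D|^{1/2})$; see \cite{Enge,Sutherland}. It is this \emph{sum} estimate, rather than the pointwise bound $\log|j(\alpha_Q)|\le M(D)$, that pins the final exponent at $1/2$ (naively using $h\cdot M(D)$ would only give $\softO(|D|)$).

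Next I would reduce to the building blocks $\gamma$ and the $A_n$. As $F=\sum_{n=0}^{n_0}A_n\gamma^n$ is a fixed finite sum, one has $\log^+|F(\alpha_Q)|\le \max_n\log^+|A_n(\alpha_Q)| + n_0\log^+|\gamma(\alpha_Q)| + O(1)$, so it suffices to bound $\sum_Q\log^+|A_n(\alpha_Q)|$ and $\sum_Q\log^+|\gamma(\alpha_Q)|$. For $A_n=r_n(j)$ with $r_n\in\Q(x)$ fixed, away from the poles of $r_n$ we have $\log^+|A_n(\alpha_Q)|\le (\deg r_n)\log^+|j(\alpha_Q)| + O(1)$, and summing yields $(\deg r_n)\sum_Q\log^+|j(\alpha_Q)|+O(h)=\softO(|D|^{1/2})$, again by the height bound for $H_D$. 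For the few $\alpha_Q$ at which $j(\alpha_Q)$ is close to a fixed algebraic pole $\xi$ of $r_n$, I would instead invoke a polynomial lower bound $|j(\alpha_Q)-\xi|\ge c/|D|^{A}$: condition (1) in the definition of \emph{good} guarantees $j(\alpha_Q)\ne\xi$, and a Liouville-type estimate (using that $j(\alpha_Q)$ is an algebraic integer of degree $\le h$ and height $\softO(|D|^{1/2})$) makes this quantitative. Such a form then contributes only $O(\log|D|)$, and there are at most $h=\softO(|D|^{1/2})$ of them.

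The treatment of $\gamma$ is analogous but exploits that it is a modular function on $\SL_2(\Z)$. Away from its poles $\gamma$ is bounded on the standard fundamental domain: $\gamma(z)\to 0$ as $\im z\to\infty$, and on the remaining compact part $\gamma$ is continuous, so $\log^+|\gamma(\alpha_Q)|=O(1)$ outside fixed neighborhoods of the elliptic points $i$ and $\rho=e^{2\pi i/3}$. From \eqref{gammaz} the only poles of $\gamma$ in $\H$ are at $i$ (where $E_6$ vanishes and $j=1728$) and at $\rho$ (where $j=0$), each of bounded order, and the hypothesis that $D<-4$ is not of the form $-3d^2$ ensures the primitive CM points $\alpha_Q$ never equal $i$ or $\rho$. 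Near these points I would bound $\log^+|\gamma(\alpha_Q)|$ by $O\!\left(\log(1/|\alpha_Q-i|)\right)$ resp.\ $O\!\left(\log(1/|\alpha_Q-\rho|)\right)$, and then use the elementary lower bounds $|\alpha_Q-i|,\,|\alpha_Q-\rho|\ge c/|D|$, which follow by writing $\alpha_Q=(-b+\sqrt{D})/(2a)$ and observing that the relevant integers (such as $|D|-4a^2$) are nonzero. As before this gives $O(\log|D|)$ per form and $\softO(|D|^{1/2})$ in total.

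I expect the main obstacle to be the uniform control of these \emph{local} contributions near the poles of $F$ — that is, establishing polynomial-in-$|D|$ lower bounds for $|j(\alpha_Q)-\xi|$ and for $|\alpha_Q-i|$, $|\alpha_Q-\rho|$ that hold simultaneously for every $Q$ and every non-special $D$. The saving grace is that only a \emph{polynomial} lower bound is needed: it converts into an $O(\log|D|)$ contribution per form, and since there are only $\softO(|D|^{1/2})$ forms the total stays within the target $\softO(|D|^{1/2})$. The far-field part, by contrast, is routine once one invokes the standard estimate $\sum_Q\im(\alpha_Q)=\softO(|D|^{1/2})$.
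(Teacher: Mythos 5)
Your overall strategy is the one the paper intends: reduce everything to the archimedean estimate $\sum_Q\log^+|j(\alpha_Q)|\approx\pi\sqrt{|D|}\sum_Q a_Q^{-1}=\softO(|D|^{1/2})$ underlying the height bound for $H_D$, and pay $\softO(|D|^{1/2})$ for the prefactor $c_1|D|^{c_2h}$ and the binomial losses. The paper's own proof is only a two-sentence deferral to Lemma~8 of \cite{Sutherland} (plus a remark about the $3/(\pi\im(z))$ term in $E_2^*$), so your write-up is substantially more explicit, and you correctly identify the real issue the paper glosses over: the contributions from $\alpha_Q$ near the poles of $\gamma$ and of the $r_n$. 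Your treatment of $\gamma$ is fine — the elementary lower bounds $|\alpha_Q-i|,|\alpha_Q-\rho|\gg|D|^{-1}$ for primitive, non-special forms do hold and give $O(\log|D|)$ per form.

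The gap is in the pole contributions of the $r_n$. The per-form bound $|j(\alpha_Q)-\xi|\ge c|D|^{-A}$ does \emph{not} follow from the Liouville estimate you describe: with $[\Q(j(\alpha_Q),\xi):\Q]=O(h)$ and absolute Weil height $h(j(\alpha_Q))=\tfrac1h\sum_{Q'}\log^+|j(\alpha_{Q'})|=\softO(|D|^{1/2})/h$, Liouville only yields $\log\bigl(1/|j(\alpha_Q)-\xi|\bigr)=\softO(|D|^{1/2})$ \emph{per form}, not $O(\log|D|)$; and since a positive proportion of the $h$ forms can have $j(\alpha_Q)$ within distance $1$ of $\xi$, summing the per-form Liouville bound gives only $\softO(|D|)$. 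A genuinely polynomial per-form lower bound is a hard Diophantine statement that is not available here. The repair is to bound the product globally rather than term by term: $\prod_Q\bigl(\xi-j(\alpha_Q)\bigr)=H_D(\xi)$, the norm $N_{\Q(\xi)/\Q}\bigl(H_D(\xi)\bigr)$ is a nonzero rational whose denominator is $e^{O(h)}$, and each conjugate satisfies $\log\bigl|H_D(\xi^{(i)})\bigr|\le\sum_Q\log^+|j(\alpha_Q)|+O(h)=\softO(|D|^{1/2})$; together these give $\sum_Q\log^+\bigl(1/|j(\alpha_Q)-\xi|\bigr)=\softO(|D|^{1/2})$, which is exactly the sum bound your argument needs (and is the same mechanism that makes your far-field estimate work). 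With that substitution your proof closes.
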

\begin{proof} The proof follows as in the proof of Lemma~8 of \cite{Sutherland}. One only needs to take care of the
dependence on the summand $\frac{3}{\pi \im(z)}$ in the definition of $E_2^*(z)$ which in turn appears in the definition
of $\gamma(z)$. We leave these details to the reader.
\end{proof}

We now prove Theorem~\ref{general_thm} given in the introduction, which we restate here.

\begin{general_thm} For discriminants $D<-4$ not of the form
$D=-3d^2$, Algorithm~2 computes $H_D(F;x)$ for good modular functions~$F(z)$.  Under the GRH, its expected running time is $\softO(|D|^{\nicefrac{5}{2}})$ and it uses $\softO(|D|^2)$ space.
\end{general_thm}
\begin{proof}
The correctness of Algorithm~2 is clear.  We now bound its complexity, under the GRH.
By Lemma~\ref{lemma:alg2ht}, the set $S$ contains $\softO(|D|^{1/2})$ primes, and as described in \S\ref{sec:primes}, we can construct~$S$ in $\softO(|D|^{1/2})$ expected time.  The expected time to compute $H_D(X)$ in step 2 is $\softO(|D|)$, by \cite[Thm.\ 1]{Sutherland}.
Each of the primes $p\in S$ satisfies $\log p =O(\log|D|)$, and therefore the expected time for step 3a is $\softO(|D|^2)$, by Lemma~\ref{lemma:alg11}.
This dominates the cost of steps 3b and 3c, and the total expected time for step 3 is thus $\softO(|D|^{5/2})$, which dominates the expected running time of the entire algorithm.
The space complexity of step 3 is dominated by the size of $\Phi_D\bmod p$, which is $\softO(|D|^2)$.
\end{proof}

\subsection{Algorithm 3}\label{P_case}

We now give an algorithm to compute the partition polynomial
\[
H_{n}^{\parti}(x):=\prod_{Q\in \calQ_{6,1-24n,1}}(x-P(\alpha_Q))
\]
defined in \eqref{partitionpolynomial}.
We do this by expressing $H_{n}^{\parti}(x)$ as a product of class polynomials
\begin{equation}
\label{classpolysp0}
H_{D,\beta}(P;x):=\prod_{Q\in \calQ_{6,D,\beta}^{\prim}/\Gamma_0(6)}\bigl(x-P(\alpha_Q)\bigr).
\end{equation}

\begin{lemma}\label{lemma:twisted}
For a positive integer $n$, let $D=1-24n$.  Then
\[
H_{n}^{\parti}(x) = \prod_{\substack{u>0\\u^2|D}} \varepsilon(u)^{h(D/u^2)}H_{D/u^2,1}(P;\varepsilon(u)x),
\]
where the class polynomials on the right and side are defined by \eqref{classpolysp0}, and  $\varepsilon(u)=1$ if $u\equiv \pm 1\bmod 12$ and $\varepsilon(u)=-1$ otherwise.
\end{lemma}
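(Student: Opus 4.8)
The plan is to prove the identity in two stages: a purely combinatorial decomposition of the index set $\calQ_{6,D,1}$ by content, and then a comparison of the resulting primitive class polynomials for the four admissible values of $\beta\bmod 12$, the sign $\varepsilon(u)$ emerging as an Atkin--Lehner eigenvalue of $P$.

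First I would record the arithmetic of $D=1-24n$: since $D\equiv 1\bmod 6$ we have $\gcd(D,6)=1$, so any $u>0$ with $u^2\mid D$ is coprime to $6$, and since $D\equiv 1\bmod{24}$ every such $u$ satisfies $u^2\equiv 1\bmod{24}$, whence $u^{-1}\equiv u\bmod{12}$ and $D/u^2\equiv 1\bmod{24}$. Every $Q=[a,b,c]\in\calQ_{6,D,1}$ has a $\Gamma_0(6)$-invariant content $u=\gcd(a,b,c)$ with $u^2\mid D$, and $Q=uQ'$ with $Q'=[a/u,b/u,c/u]$ primitive of discriminant $D/u^2$. Because $\gcd(u,6)=1$ we still have $6\mid a/u$, while $b/u\equiv u^{-1}\equiv u\bmod{12}$; thus $Q'\in\calQ_{6,D/u^2,u}^{\prim}$, giving a content-preserving bijection
\[
\calQ_{6,D,1}/\Gamma_0(6)\ \xrightarrow{\ \sim\ }\ \coprod_{u^2\mid D}\calQ_{6,D/u^2,u}^{\prim}/\Gamma_0(6).
\]
Crucially $\alpha_Q=\alpha_{Q'}$ as points of $\H$ (the factor $u$ cancels in $\tfrac{-b+\sqrt D}{2a}$), so $P(\alpha_Q)=P(\alpha_{Q'})$ exactly, and grouping the factors of $H_n^{\parti}(x)$ by content yields
\begin{equation}\label{eq:contentsplit}
H_n^{\parti}(x)=\prod_{\substack{u>0\\ u^2\mid D}}H_{D/u^2,\,u}(P;x),
\end{equation}
with $H_{D',\beta}(P;x)$ as in \eqref{classpolysp0}.

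Comparing \eqref{eq:contentsplit} with the claimed formula, and using $\varepsilon(u)^{h(D')}H_{D',1}(P;\varepsilon(u)x)=\prod_{\tilde Q}\bigl(x-\varepsilon(u)P(\alpha_{\tilde Q})\bigr)$ (valid since $\varepsilon(u)=\pm1$), it remains to prove, for each admissible $\beta=u\bmod 12$, the multiset identity
\begin{equation}\label{eq:betashift}
\bigl\{\,P(\alpha_{Q'}):Q'\in\calQ_{6,D',\beta}^{\prim}/\Gamma_0(6)\,\bigr\}=\bigl\{\,\varepsilon(u)\,P(\alpha_{\tilde Q}):\tilde Q\in\calQ_{6,D',1}^{\prim}/\Gamma_0(6)\,\bigr\},
\end{equation}
where $D'=D/u^2$. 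Here I would invoke that the Atkin--Lehner group $\{1,W_2,W_3,W_6\}$ of $\Gamma_0(6)$ permutes the four residues $\beta\in\{1,5,7,11\}$ solving $\beta^2\equiv D'\equiv 1\bmod{24}$ simply transitively, via $W_Q\colon\beta\mapsto\beta_Q$ with $\beta_Q\equiv-\beta\bmod 2Q$ and $\beta_Q\equiv\beta\bmod 2(6/Q)$; explicitly $W_2,W_3,W_6$ send $\beta=1$ to $\beta=7,5,11$. Thus $W=W(\beta)$ identifies the two index sets in \eqref{eq:betashift}, and the sign is the eigenvalue of $P$ under $W(\beta)$. From $P=\tfrac12 g/\Delta_6$ with $g=E_2(z)-2E_2(2z)-3E_2(3z)+6E_2(6z)$ and $\Delta_6=\eta(z)^2\eta(2z)^2\eta(3z)^2\eta(6z)^2$, a direct computation (using $\eta(-1/z)=\sqrt{-iz}\,\eta(z)$ and $E_2(-1/z)=z^2E_2(z)+\tfrac{6z}{\pi i}$, for which the coefficients $36-36-36+36=0$ kill the quasimodular term in $g$) gives $P|_{-2}W_6=P$; the analogous computations (or Ligozat's $\eta$-quotient formulas) give $P|_{-2}W_2=P|_{-2}W_3=-P$, consistent with $W_6=W_2W_3$. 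Tracking each case shows the eigenvalue is precisely $\varepsilon(u)$: $W_6\leftrightarrow\beta=11$ (sign $+1$), $W_3\leftrightarrow\beta=5$ (sign $-1$), $W_2\leftrightarrow\beta=7$ (sign $-1$). Feeding \eqref{eq:betashift} into \eqref{eq:contentsplit} then yields the lemma.

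The main obstacle is the passage from the eigenvalue relation $P|_{-2}W=\varepsilon(u)P$ to the pointwise statement \eqref{eq:betashift}, because $P$ has weight $-2$ rather than $0$. Written at the CM point, the slash relation reads $P(\alpha_{Q'})=\varepsilon(u)\,(\det W)\,(c\alpha_{\tilde Q}+d)^{-2}P(\alpha_{\tilde Q})$, and the automorphy factor $(\det W)(c\alpha_{\tilde Q}+d)^{-2}$ is a nontrivial algebraic number, not a sign. I expect this to be resolved exactly as in \cite{BruinierOno2011,LarsonRolen}: the numbers $P(\alpha_Q)$ entering $H_n^{\parti}$ are the normalized CM values for which $(24n-1)P(\alpha_Q)$ is shown to be an algebraic integer lying in the ring class field, and with this normalization the period factor cancels against its counterpart on the $\beta=1$ side, so that $W(\beta)$ acts on the normalized values through the scalar $\varepsilon(u)$ alone. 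Verifying this compatibility of the weight-$(-2)$ normalization with the Atkin--Lehner action is the one step that genuinely uses the arithmetic of \cite{BruinierOno2011} rather than the formal computations above.
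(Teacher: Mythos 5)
Your argument is essentially the paper's own: the paper obtains your content decomposition by citing the proposition on p.~505 of \cite{GKZ} (you prove it directly, including the key observation $\alpha_{uQ'}=\alpha_{Q'}$ and $b/u\equiv u\bmod 12$, which is fine), and then, exactly as you do, converts $H_{D',\beta}(P;x)$ into $H_{D',1}(P;\pm x)$ via the Atkin--Lehner involutions, using that $P$ is fixed by $W_6$ and negated by $W_3$ (hence by $W_2=W_6W_3$). The one step you leave open --- reconciling the weight $-2$ automorphy factor of $W$ with the pointwise sign in your multiset identity --- is a phantom created by the paper's own abuse of notation: the values $P(\alpha_Q)$ entering $H_{n}^{\parti}$ are the CM values of the \emph{weight-$0$} weak Maass form $F_p=-\partial_{-2}(P)$ (this is the object decomposed as $A+B\gamma$ in \S 3.3, and the one Lemma~\ref{lemma:involution} implicitly treats), not of the weight $-2$ form. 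Since $\partial_{-2}$ intertwines the weight $-2$ and weight $0$ slash actions of $\GL_2^+(\Q)$, the relations $P|_{-2}W_6=P$ and $P|_{-2}W_3=-P$ transfer to $F_p|_{0}W_6=F_p$ and $F_p|_{0}W_3=-F_p$, and in weight $0$ the slash is plain composition, so the CM values pick up exactly the sign $\varepsilon(u)$ with no automorphy factor; combined with the GKZ description of how $W_Q$ permutes the sets $\calQ_{6,D',\beta}^{\prim}/\Gamma_0(6)$ (which you state correctly), your multiset identity follows with no appeal to the integrality results of \cite{BruinierOno2011} or \cite{LarsonRolen}. With that clarification your proof is complete and coincides with the paper's, in somewhat greater detail.
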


\begin{proof}
Using \cite{GKZ}, p.~505 equation (1), we obtain
\begin{equation}
\label{classpolysp}
H_{n}^{\parti}(x)=\prod_{\substack{u>0\\u^2\mid 1-24n}} H_{(1-24n)/u^2,\beta_u}(P;x),
\end{equation}
where $\beta_u\in \Z/12\Z$ denotes the unique residue such that $\beta_u\cdot u\equiv 1 \pmod{12}$, equivalently, $\beta_u\equiv u\pmod{12}$.

Let $\Delta<0$ be any discriminant such that  $\Delta \equiv 1\pmod{24}$.
Since $P$ is invariant under the Atkin-Lehner involution $W_6$, we have
\[
H_{\Delta,-1}(P;x)=H_{\Delta,1}(P;x).
\]
Since $P$ is taken to its negative under the Atkin-Lehner involution $W_3$, we have
\[
H_{\Delta,\pm 5}(P;x)=H_{\Delta,1}(-P;x)=(-1)^{h(\Delta)}H_{\Delta,1}(P;-x).
\]
Putting this into \eqref{classpolysp}, we obtain the assertion.
\end{proof}

For the remainder of this section (and also in Section~4), we shall abuse notation and drop the dependence on $\beta$ for modular functions on
$\Gamma_0(6)$. In every case we will have $\beta=1$. For example, we let
$H_{D}(P;x):=H_{D,1}(P;x)$ for convenience.
We cannot directly apply Algorithm~2 to compute $H_{D}(P;x)$ because the function $P(z)$ does not satisfy all of our requirements for a good modular function; some minor changes are required, as we now explain.

As shown by Larson and Rolen, the function $P(z)$ may be decomposed as
\begin{equation}\label{eq:PAB}
P(z) = A(z)+B(z)\gamma(z),
\end{equation}
where $\widehat{A}(z)=A(z)j(z)(j(z)-1728)$ and $B(z)$ are weakly holomorphic modular functions for $\Gamma_0(6)$; see Lemma 2.2 in \cite{LarsonRolen}.
The expression for $P(z)$ in \eqref{eq:PAB} does not satisfy our definition of a good modular function $F(z)$ because $A(z)$ and $B(z)$ are not rational functions of $j(z)$.  However our two key requirements are satisfied:
for discriminants $D$ of the form $1-24n$, the values of $A(z)$ and $B(z)$ at CM points lie in the ring class field $K_\O$, and the polynomial $|D|H_D(P;x)$ has integer coefficients (so $c_1=c_2=1$).

For each of the functions $g=\widehat{A},B$, Larson and Rolen compute explicit polynomials
\[
\Psi_g(X;z) :=\prod_{\alpha} (X-g(\alpha z)),
\]
where the product ranges over right coset representatives $\alpha$ for $\Gamma_0(6)$ in $\SL_2(\Z)$.
The polynomials $\Psi_g$ may be expressed as polynomials in $X$ whose coefficients are integer polynomials in $j(z)$, and we regard them as elements of $\Z[X,J]$; see Appendix A of \cite{LarsonRolen} for the exact values of $\Psi_{\widehat{A}}$ and $\Psi_B$.  
Here each occurrence of $j(z)$ is replaced by the indeterminate $J$.
While $\widehat{A}(z)$ and $B(z)$ are not rational functions of $j(z)$, we note that the curves defined by $\Psi_{\widehat{A}}(X,J)$ and $\Psi_B(X,J)$ both have genus 0 (and thus admit a rational parametrization, although we shall not make explicit use of this fact).

It follows that, at least for discriminants prime to 6, the CM values of $\widehat{A}(z)$ and $B(z)$ are class invariants, and we can compute the class polynomials $H_D(\widehat{A};x)$ and $H_D(B;x)$ using standard algorithms such as those found in \cite{Broker,Enge,EngeSutherland}.
Under the GRH, we can compute these class polynomials in $\softO(|D|)$ expected time, which is negligible compared to the $\softO(|D|^{5/2})$ expected running time of Algorithm~2.

For $g=\widehat{A},B$,  we can use $H_D(g;x)$ to uniquely determine a root $g_k$ of $\Psi_g(x,j_k)$ corresponding to a singular modulus $j_k$ by computing the unique root of the linear polynomial
\[
f_k(g;x) := \gcd\bigl(\Psi_g(x,j_k),H_D(g;x)\bigr).
\]
This is useful because we would otherwise have 6 possible values of $g_k$ to choose from; in both cases $\Psi_g(x,j_k)$ has degree 12, and 6 of its roots lie in the ring class field.  In the context of Algorithm~2, we can use the values $g_k$ to replace the quantities $r_n(j_k)$ in step 3.c.ii that require $A_n(z)$ to be a rational function of $j(z)$.  For this purpose we let $\hat a_k$ and $b_k$ denote the unique roots of the polynomials $f_k(\widehat{A};x)$ and $f_k(B;x)$, respectively, and let $a_k$ denote $\hat a_k/(j_k(j_k-1728))$.

There is one other issue to consider.
The coefficients of the class polynomials $H_D(\widehat{A};x)$ and $H_D(B;x)$ are not rational integers; they are algebraic integers in the quadratic field $K=\Q(\sqrt{D})$.
This presents a potential difficulty for the CRT approach;
while we always work modulo primes $p$ for which $D$ is a quadratic residue, we must make an arbitrary choice for the square root of $D$ modulo $p$, and there is no clear way to make these choices consistently across many primes~$p$.
The following lemma implies that it does not matter which choice we make, we will get the same answer in either case.

\begin{lemma}\label{lemma:involution}
Let $P(z)=\widehat{A}(z)/(j(z)(j(z)-1728) + B(z)\gamma(z)$ be as above.
Then for any discriminant $D=1-24n$, we have
\[
\{P(\alpha_Q):\alpha_Q\in\calQ_{6,D,\beta}^{\prim}/\Gamma_0(6)\} =
\{ \overline{P(\alpha_Q)}:\alpha_Q\in\calQ_{6,D,\beta}^{\prim}/\Gamma_0(6)\}.
\]
\end{lemma}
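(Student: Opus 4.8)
The plan is to show that complex conjugation merely permutes the multiset $\{P(\alpha_Q)\}$ among itself, rather than mapping it somewhere new. The mechanism is geometric: complex conjugation corresponds to the reflection $z\mapsto -\bar z$, which sends a Heegner form $Q=[a,b,c]$ to $Q'=[a,-b,c]$ and therefore flips the residue class $\beta\bmod 12$ to $-\beta$. The resulting discrepancy will then be absorbed using the Atkin--Lehner invariance of $P$ that was already exploited in the proof of Lemma~\ref{lemma:twisted}. The first and central analytic input is the functional identity
\begin{equation*}
\overline{P(z)} = P(-\bar z)\qquad(z\in\H),
\end{equation*}
which I would prove by verifying it on each building block of $P(z)=\widehat A(z)/\bigl(j(z)(j(z)-1728)\bigr)+B(z)\gamma(z)$.

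For the building blocks, the functions $E_4$, $E_6$, $j$, $\widehat A$, and $B$ all have real (indeed rational) Fourier coefficients at $\infty$; for $\widehat A$ and $B$ this is clear from the explicit expansions of Larson and Rolen~\cite{LarsonRolen}. Any such $f$ then satisfies $\overline{f(z)}=f(-\bar z)$, since $\overline{e^{2\pi i n z}}=e^{2\pi i n(-\bar z)}$. The only nonholomorphic ingredient is the term $-3/(\pi\im(z))$ appearing in $E_2^{*}(z)$; because this term is real and $\im(-\bar z)=\im(z)$, it is fixed both by conjugation and by $z\mapsto-\bar z$, so $\overline{E_2^{*}(z)}=E_2^{*}(-\bar z)$ as well. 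Assembling these gives $\overline{\gamma(z)}=\gamma(-\bar z)$ and hence $\overline{P(z)}=P(-\bar z)$. Next I would record the elementary fact that for $Q=[a,b,c]\in\calQ_{6,D,\beta}^{\prim}$ one has $\alpha_Q=(-b+\sqrt D)/(2a)$, so that $-\bar\alpha_Q=(b+\sqrt D)/(2a)=\alpha_{Q'}$ with $Q'=[a,-b,c]\in\calQ_{6,D,-\beta}^{\prim}$; combining, $\overline{P(\alpha_Q)}=P(\alpha_{Q'})$. Finally, conjugating the action of $\gamma=\left(\begin{smallmatrix}p&q\\r&s\end{smallmatrix}\right)\in\Gamma_0(6)$ by $\left(\begin{smallmatrix}-1&0\\0&1\end{smallmatrix}\right)$ produces $\left(\begin{smallmatrix}p&-q\\-r&s\end{smallmatrix}\right)$, which again lies in $\Gamma_0(6)$; hence $Q\mapsto Q'$ descends to a well-defined bijection $\calQ_{6,D,\beta}^{\prim}/\Gamma_0(6)\to\calQ_{6,D,-\beta}^{\prim}/\Gamma_0(6)$.

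It then remains to undo the sign flip $\beta\mapsto-\beta$, which is the step I expect to be the crux. Here I would invoke the Atkin--Lehner involution $W_6$: since $P$ is $W_6$-invariant, $W_6$ gives a bijection $\calQ_{6,D,-\beta}^{\prim}/\Gamma_0(6)\to\calQ_{6,D,\beta}^{\prim}/\Gamma_0(6)$ under which the values $P(\alpha_Q)$ are unchanged, i.e.\ the identity $H_{D,-\beta}(P;x)=H_{D,\beta}(P;x)$ (the general form of what was shown for $\beta=1$ in the proof of Lemma~\ref{lemma:twisted}). Chaining the two bijections yields
\begin{equation*}
\{\overline{P(\alpha_Q)}:Q\in\calQ_{6,D,\beta}^{\prim}/\Gamma_0(6)\}=\{P(\alpha_{Q'}):Q'\in\calQ_{6,D,-\beta}^{\prim}/\Gamma_0(6)\}=\{P(\alpha_Q):Q\in\calQ_{6,D,\beta}^{\prim}/\Gamma_0(6)\},
\end{equation*}
which is the assertion. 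The genuinely delicate point is not any single computation but the bookkeeping of the residue $\beta$: conjugation does \emph{not} preserve $\calQ_{6,D,\beta}^{\prim}$ on the nose, and the argument succeeds only because the $W_6$-invariance of $P$ exactly compensates for the induced map $\beta\mapsto-\beta$.
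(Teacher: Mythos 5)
Your proof is correct and follows essentially the same route as the paper's: the paper's (much terser) argument rests on exactly the identity $\{(f\mid W_6)(\alpha_Q)\}=\{\overline{f(\alpha_Q)}\}$ for real-coefficient $f$, which is precisely what your conjugation-flips-$\beta$ / $W_6$-flips-it-back bookkeeping establishes, followed by the $W_6$-invariance of $P$. You have simply supplied the details (the identity $\overline{P(z)}=P(-\bar z)$ and the bijection $[a,b,c]\mapsto[a,-b,c]$ on classes) that the paper leaves implicit.
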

\begin{proof}
For any modular function $f$ as $P$
(e.g. any weak Maass form) with real coefficients, we have
\[
\{(f\mid W_6)(\alpha_Q):\alpha_Q\in\calQ_{N,D,\beta}^{\prim}/\Gamma_0(6)\} =
\{ \overline{f(\alpha_Q)}:\alpha_Q\in\calQ_{N,D,\beta}^{\prim}/\Gamma_0(6)\}.
\]
Using the invariance of $P$ under $W_6$, we get
\[
\{P(\alpha_Q):\alpha_Q\in\calQ_{6,D,\beta}^{\prim}/\Gamma_0(6)\} =
\{ \overline{P(\alpha_Q)}:\alpha_Q\in\calQ_{6,D,\beta}^{\prim}/\Gamma_0(6)\}.
\]
\end{proof}

We now give the algorithm to compute the partition polynomial $H_{n}^{\parti}(x)$.
\medskip

\noindent
\textbf{Algorithm 3}\\
\textbf{Input:} A positive integer $n$.\\
\textbf{Output:} The partition polynomial $H_{n}^{\parti}(x)\in\Q[x]$.\\
\vspace{-8pt}
\begin{enumerate}[1.]
\item Let $1-24n=v^2D_0$, where $D_0$ is a fundamental discriminant.
\item For each divisor $u$ of $v$, let $D=u^2D_0$ and compute $H_{D}(P;x)$ as follows:
\begin{enumerate}[a.]
\item Compute the class polynomials $H_{D}(\widehat A;x)$ and $H_{D}(B;x)$.
\item Using the height bound $B_P(D)$ defined below, compute the polynomial $H_{D}(P;x)$ using a modified version of Algorithm~2 in which $r_0(j_k)$ is replaced by $a_k=\widehat a_k/(j_k(j_k-1728))$ and $r_1(j_k)$ is replaced by $b_k$, where $a_k$ and $b_k$ are as defined above.
\end{enumerate}
\item Compute $H_{n}^{\parti}(x)\in\Q[x]$ via Lemma~\ref{lemma:twisted}.
\end{enumerate}
\medskip

The height bound $B_P(D)$ is defined as
\begin{equation}\label{bounds}
  B_P(D):=c_1 B_j(D)+h(D)\log |D|,
\end{equation}
where $B_j(D)$ is an explicit bound on the height of the Hilbert class polynomial derived as in Lemma~8 of \cite{Sutherland}.
Here $c_1$ denotes an effectively computable positive constant.

\begin{remark}
We have not tried to obtain the optimal constant for $c_1$.
However, it is reasonable to suspect that we can take $c_1:=7/3$, which we note is equal to $\deg_J(\Psi_{\widehat A})/\deg_X(\Psi_{\widehat A})=28/12$ (which dominates $\deg_J(\Psi_B)/\deg_X(\Psi_B)=18/12$).
\end{remark}

\begin{lemma}\label{lemma:partht}
For all discriminants $D\equiv 1\bmod 24$ we have $\height\bigl(H_D(P;x)\bigr) \le B_P(D)$.
\end{lemma}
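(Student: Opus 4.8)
The plan is to split $\height\bigl(H_D(P;x)\bigr)$ into an \emph{archimedean} contribution, coming from the complex absolute values of the roots $P(\alpha_Q)$, and a \emph{denominator} contribution, coming from clearing the denominators of the rational coefficients. Writing $H_D(P;x)=\sum_{k=0}^{h}(-1)^{h-k}e_{h-k}x^{k}$ with $h=h(D)$ and $e_m=e_m\bigl(\{P(\alpha_Q)\}_Q\bigr)$ the $m$-th elementary symmetric function, the crude Mahler-type estimate gives $|e_m|\le\binom{h}{m}\prod_Q\max\bigl(1,|P(\alpha_Q)|\bigr)$. By the integrality theorem of \cite{BruinierOno2011,LarsonRolen}, each $(24n-1)P(\alpha_Q)=|D|\,P(\alpha_Q)$ is an algebraic integer, so $|D|^{m}e_m\in\Z$, whence $|D|^{h}H_D(P;x)\in\Z[x]$ and clearing denominators costs at most $h(D)\log|D|$. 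Collecting these, I would obtain
\[
\height\bigl(H_D(P;x)\bigr)\le h(D)\log|D|+h(D)\log 2+\sum_Q\log^{+}|P(\alpha_Q)|,
\]
so it remains only to bound the archimedean sum $\sum_Q\log^{+}|P(\alpha_Q)|$ by $c_1 B_j(D)$, absorbing the $h(D)\log 2$ into the constant.

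To control $\sum_Q\log^{+}|P(\alpha_Q)|$ I would follow the pattern of Lemma~8 of \cite{Sutherland}, using the decomposition $P(\alpha_Q)=a_Q+b_Q\gamma(\alpha_Q)$ coming from \eqref{eq:PAB}, with $a_Q=\widehat a_Q/\bigl(j(\alpha_Q)(j(\alpha_Q)-1728)\bigr)$. The quantities $\widehat a_Q$ and $b_Q$ are roots of $\Psi_{\widehat A}(X,j(\alpha_Q))$ and $\Psi_B(X,j(\alpha_Q))$, which are monic in $X$ of degree $\deg_X=12$ with integer-polynomial coefficients in $J$ of degrees $\deg_J=28$ and $18$. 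A root bound (Cauchy's estimate, sharpened via the pole orders of $\Psi_{\widehat A},\Psi_B$ at the cusps) then yields $\log^{+}|b_Q|\le\tfrac{18}{12}\log^{+}|j(\alpha_Q)|+O(1)$ and $\log^{+}|\widehat a_Q|\le\tfrac{28}{12}\log^{+}|j(\alpha_Q)|+O(1)$; since the non-special hypothesis keeps $|j(\alpha_Q)(j(\alpha_Q)-1728)|$ bounded away from $0$, the division contributes only an $O(\log|D|)$ term, so $\log^{+}|a_Q|\le\tfrac{28}{12}\log^{+}|j(\alpha_Q)|+O(\log|D|)$. For $\gamma(\alpha_Q)$ I would argue analytically: $\gamma(z)$ is a weight-$0$ nonholomorphic modular function that vanishes at the cusp and whose only poles in the upper half plane lie at the order-$2$ and order-$3$ elliptic points $i$ and $\rho$, which the hypotheses $D<-4$, $D\ne-3d^2$ avoid; its $q$-expansion, together with a lower bound $\im(\alpha_Q)\gg|D|^{-1}$ that controls the $\tfrac{3}{\pi\im(z)}$ term of $E_2^{*}$, gives $\log^{+}|\gamma(\alpha_Q)|=O(\log|D|)$ uniformly. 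Combining, $\log^{+}|P(\alpha_Q)|\le\tfrac{7}{3}\log^{+}|j(\alpha_Q)|+O(\log|D|)$ for every $Q$.

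Summing over the $h(D)$ classes and using that the estimate defining $B_j(D)$ in \cite[Lem.~8]{Sutherland} already yields $\sum_Q\log^{+}|j(\alpha_Q)|\le B_j(D)$, I arrive at $\sum_Q\log^{+}|P(\alpha_Q)|\le\tfrac{7}{3}B_j(D)+O\bigl(h(D)\log|D|\bigr)$. Folding the lower-order $O(h(D)\log|D|)$ into the $h(D)\log|D|$ summand and taking $c_1$ to dominate $7/3$ together with the implied constants gives $\height\bigl(H_D(P;x)\bigr)\le c_1B_j(D)+h(D)\log|D|=B_P(D)$, as claimed. This also explains the heuristic value $c_1=7/3=\deg_J(\Psi_{\widehat A})/\deg_X(\Psi_{\widehat A})$ in the preceding remark, the dominance over $\deg_J(\Psi_B)/\deg_X(\Psi_B)=18/12$ reflecting that $\widehat A$ carries the larger pole order at the cusps.

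The hard part will be the uniform control of $\log^{+}|\gamma(\alpha_Q)|$. Unlike $a_Q$ and $b_Q$, which are governed by honest algebraic modular equations, $\gamma$ is nonholomorphic, so the naive route through Masser's formula \eqref{eq:gammabeta} in Lemma~\ref{MasserLemma} involves the quantities $\beta_{\mu,\nu}$ of size $|j|^{O(\psi(|D|))}$, which are far too large; one must instead exploit the analytic cancellation that keeps $\gamma(\alpha_Q)$ itself small, and verify that the $\tfrac{3}{\pi\im(z)}$ term (the source of the exclusion of $D=-4$ and $D=-3d^2$) contributes only $O(\log|D|)$ uniformly, using a distance bound from $\alpha_Q$ to the elliptic points $i,\rho$ valid for all reduced forms. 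A secondary technical point is upgrading the crude root bound $\log^{+}|\widehat a_Q|\le\deg_J\cdot\log^{+}|j(\alpha_Q)|$ to the balanced ratio $\deg_J/\deg_X$; for the lemma as stated any effectively computable $c_1$ suffices, so this sharpening is optional, but it is precisely what pins down the conjectural value $c_1=7/3$.
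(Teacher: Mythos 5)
Your strategy is sound in outline, but it takes a genuinely different route from the paper, and the route you chose creates an obstacle that the paper's route avoids. The paper's proof (admittedly only a sketch) bounds $|P(\alpha_Q)|$ \emph{directly} from the Fourier expansion of $P$ at the cusps of $\Gamma_0(6)$, exactly as in Lemma~8 of \cite{Sutherland} for $j$: the principal part is $q^{-1}$ up to the nonholomorphic factors $1/(2\pi\im z)$, the Atkin--Lehner eigenfunction property controls the expansions at the other cusps, and $\im(\alpha_Q)\ge\sqrt{3}/2$ for reduced Heegner points makes the nonholomorphic terms harmless. The decisive structural point is that $P$, being a harmonic weak Maass form, has \emph{no poles in} $\H$, so only cusp behavior matters and one gets $\sum_Q\log^{+}|P(\alpha_Q)|\lesssim c_1B_j(D)$ with no further input. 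Your route through the Larson--Rolen decomposition $P=A+B\gamma$ breaks $P$ into pieces ($A$ and $\gamma$) that individually \emph{do} have poles at the elliptic points $i$ and $\rho$; these cancel in the sum but must be controlled separately in your argument. The payoff of your route is that it genuinely explains the constant $c_1=7/3=\deg_J(\Psi_{\widehat A})/\deg_X(\Psi_{\widehat A})$, which the paper only offers as a heuristic.

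The gap is precisely where you flag it, and it is not cosmetic: the claims that ``the non-special hypothesis keeps $|j(\alpha_Q)(j(\alpha_Q)-1728)|$ bounded away from $0$'' and that $\log^{+}|\gamma(\alpha_Q)|=O(\log|D|)$ uniformly do not follow from $j(\alpha_Q)\notin\{0,1728\}$ alone. A priori, integrality only gives $\prod_Q|j(\alpha_Q)-1728|\ge 1$, which permits an individual factor to be as small as $e^{-\pi\sqrt{|D|}\,h(D)}$ and would wreck your $O(\log|D|)$ bound. What saves you is an explicit geometric estimate: for a reduced primitive form $[a,b,c]$ with $D<-4$ one has $|\alpha_Q-i|\gg|D|^{-1}$ (if $b\ne 0$ then $|\alpha_Q-i|\ge 1/(2a)$; if $b=0$ then $a\ne c$ by primitivity and $|\alpha_Q-i|\ge|c-a|/(2c)\ge 1/(2c)$), and similarly $|\alpha_Q-\rho|\gg|D|^{-1}$ when $D\ne-3d^2$; combined with the double and triple vanishing of $j-1728$ and $j$ at $i$ and $\rho$ this gives $|j(\alpha_Q)-1728|\gg|D|^{-2}$ and $|j(\alpha_Q)|\gg|D|^{-3}$, whence your $O(\log|D|)$ terms. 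You need to supply this estimate (or switch to the paper's direct Fourier-expansion argument, which never sees the elliptic points); as written, the crucial step is asserted rather than proved. Two minor points: for reduced forms $\im(\alpha_Q)\ge\sqrt{3}/2$, so your weaker bound $\im(\alpha_Q)\gg|D|^{-1}$ is more than you need for the $3/(\pi\im z)$ term; and your denominator accounting via $|D|^{m}e_m\in\Z$ correctly reproduces the $h(D)\log|D|$ summand of $B_P(D)$.
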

\begin{proof}
The proof is analogous to
 the proof of Lemma \ref{lemma:alg2ht}, which in turn follows as in the proof of Lemma~8 of \cite{Sutherland}.
Decorating that  proof with the asymptotic properties of the Fourier expansion of the function $P(z)$,
 which is the image of a simple
weight $-2$ weakly holomorphic modular form under the differential operator
$\partial_{-2}:=\frac{1}{2\pi i}\cdot \frac{\partial}{\partial z}+\frac{1}{2\pi \im(z)}$,
gives the desired result.
\end{proof}

We now prove Theorem~\ref{partition_thm} given in the introduction, which we restate here.

\begin{partition_thm} For all positive integers $n$, Algorithm~3 computes $H_{n}^{\parti}(x)$.
Under the GRH, its expected running time is $\softO(n^{\nicefrac{5}{2}})$ and it uses $\softO(n^2)$ space.
\end{partition_thm}
\begin{proof}
The correctness of Algorithm~3 follows from Lemmas~\ref{lemma:twisted} and~\ref{lemma:involution}, and the correctness of Algorithm~2.
For the complexity bound, we first note that the degree of $H_{n}^{\parti}(x)$ is given by the Hurwitz-Kronecker class number $H(1-24n)=\sum_D h(D)$, where $D$ varies over negative discriminants dividing $1-24n$.
It is known that $H(D)=O\bigl(h(D)(\log\log|D|)^2\bigr)$; see, e.g., \cite[Lemma~9]{Sutherland}.
The complexity bounds then follow from the height bound in Lemma~\ref{lemma:partht} and the complexity bounds in Theorem~\ref{general_thm}.
\end{proof}

To simplify the practical implementation of Algorithm~3, we make the following remark:
it is not actually necessary to compute the class polynomials $H_{D}(\hat{A};x)$ and $H_{D}(B;x)$.
Instead, for each singular modulus $j_k$, one can simply compute all 36 possible combinations $s_i+t_ij_k$, where $s_1,\ldots,s_6$ are the roots of $\Psi_{\hat{A}}(x,j_k)$ that lie in $\Fp$ (where $p\equiv 11\bmod 12$ is a suitable prime) and $t_1,\ldots,t_6$ are the roots of $\Psi_B(x,j_k)$ that lie in $\Fp$.
For all but finitely many primes $p$, exactly 32 of these 36 values will be distinct, and there will be two pairs of repeated values, corresponding to $P_k=a_k+b_kj_k$ and $-P_k=-a_k-b_kj_k$.  We do not prove this claim here, but observe that Lemma~\ref{lemma:involution} guarantees that the value $P_k$ will be repeated, so if one in fact finds the situation modulo $p$ to be as claimed (exactly two pairs of repeated values that differ only in sign), then the end result will be provably correct.  For the handful of primes $p$ where the claim does not hold, one simply discards $p$ and selects another suitable prime in its place.

Using the observation above, one may compute the polynomial
\[
f(x)=|D|^{2h(D)}\prod_k(x^2-P_k^2) = (-1)^{h(D)}|D|^{2h(D)}H_{D}(P;x)H_{D}(P;-x) \bmod p
\] modulo a sufficient number of primes $p$ (using a suitably increased height bound), and then apply the CRT to obtain the integer polynomial $f(x)$ which may then be factored in $\Z[x]$ to yield the required polynomial $H_{D}(P;x)$.


\section{Numerical examples}\label{example}

As a first example, let us compute $H_1^{\parti}(x)$ using Algorithm~3, recapitulating the example given in the introduction of \cite{BruinierOno2011}.
We have $n=1$, and the discriminant $D=1-24n=-23$ is fundamental, so $u=1$.  We begin by computing the class polynomials
\footnotesize
\begin{align*}
H_{-23}(\widehat A; x) &= x^3 + (264101659831625\Delta - 76898070951625)/2\thinspace x^2\\
&\hspace{23.6pt}+ (4866595720359935196250\Delta + 1237728700002625503750)\thinspace x\\
&\hspace{23.6pt}+ (-14048754886813637262794029921875\Delta - 31056014444792221417574181765625)/2,\\
H_{-23}(B;x) &=x^3 + (-35487375\Delta - 35487375)\thinspace x^2 + (6837889760625\Delta - 75216787366875)/2\thinspace x\\
&\hspace{23.6pt} + (842331597312734375\Delta + 2863927430863296875)/2,
\end{align*}
\normalsize
where $\Delta$ denotes a square root of $-23$.
We then compute the {\it heuristic} height bound
\[
B_P(-23) = \nicefrac{7}{3}\ B_j(-23) + h(-23)\log23\approx 83.25
\]
for $H_{-23}(P;x)$, using \cite[Lemma~8]{Sutherland} to compute $B_j(-23)\approx 31.65$.

\begin{remark} We refer to this bound as a heuristic bound because we used $c_1=7/3$ as
in the discussion in the previous section.
Implementing the algorithm with this bound for $n\leq 750$ always gave the correct values for $p(n)$.
Moreover, in every case the polynomials $H_D(P;x)$ computed by the algorithm split into linear factors over the ring class field for the order for discriminant $D$.
\end{remark}

We now use Algorithm~2 to compute $H_{-23}(P;x)$, with $r_0(j_k)=\widehat a_k/(j_k(j_k-1728))$ and $r_1(j_k)=b_k$.
The order $\O$ of index 11 in the order of discriminant $-23$ is suitable for the integer 23, and the primes in the set
\[
S=\{1562207,2744591,4294607,6454031,7089107,10010291\}
\]
are all suitable for $\O$ and $23$,  with $\prod_{p\in S}\log p \approx 91.93 > B_P(-23) + \log 2$.
We then compute
\[
H_{-23}(j;x) =  x^3 + 3491750 x^2 - 5151296875 x + 12771880859375
\]
using \cite[Alg.\ 2]{Sutherland}.

Starting with the first prime $p=1562207$, we compute $\Phi_{23}\bmod p$ using Algorithm 1.1 (which in this case just calls \cite[Alg.\ 2.1]{BLS}, since $23$ is prime).
We then find the roots $j_k$ of $H_{-23}(x)\bmod p$, and for each $j_k$ we compute:
\begin{itemize}
\item $\gamma_k=(2\beta_{0,2}-\beta_{1,1})/\beta_{0,1}\bmod p$ (via \eqref{eq:betas}, using $\Phi_{23}\bmod p$ and $j_k$).
\item $\widehat a_k$ as the unique root of $f_k(\hat A; x)=\gcd\bigl(H_{-23}(\hat A;x),\Psi_{\widehat A}(x,j_k)\bigr)\bmod p$.
\item $b_k$ as the unique root of $f_k(B;x)=\gcd\bigl(H_{-23}(B;x),\Psi_B(x,j_k)\bigr)\bmod p$.
\item $P_k = a_k/(j_k(1728-j_k)) + b_k\gamma_k$.
\end{itemize}

For the prime $p=1562207$ the results of these computations are summarized below.

\begin{center}
\begin{tabular}{rrrr}
&\underline{$\quad k=1$}&\underline{$\quad k=2$}&\underline{$\quad k=3$}\\
$j_k$:&244476&467416&482979\\
$\gamma_k$:&1461486&587848&220836\\
$\widehat a_k$:&1201792&98544&239915\\
$b_k$:&1120135&560362&531933\\
$P_k$:&1352290&519913&1252234
\end{tabular}
\end{center}
\smallskip

Using the constants $c_1=c_2=1$ in our definition of a good modular function, we compute
\[
f(x) = 23^3\prod_k(x-P_k) = 12167x^3 + 1282366x^2 + 337961x + 1150855 \pmod{1562207}
\]
as the reduction of $|D|^{h(D)}H_D(P;x)$ modulo $p$.
Repeating this process for the remaining primes in $S$ yields the following polynomials $|D|^{h(D)}H_D(P;x)\bmod p$.
\smallskip

\begin{center}
\begin{tabular}{ll}\vspace{2pt}
$12167x^3 + 2464750x^2 + 1900168x + 391209$&$\pmod{2744591}$\\\vspace{2pt}
$12167x^3 + 4014766x^2 + 1900168x + 3491241$&$\pmod{4294607}$\\\vspace{2pt}
$12167x^3 + 6174190x^2 + 1900168x + 1356058$&$\pmod{6454031}$\\\vspace{2pt}
$12167x^3 + 6809266x^2 + 1900168x + 1991134$&$\pmod{7089107}$\\\vspace{2pt}
$12167x^3 + 9730450x^2 + 1900168x + 4912318$&$\pmod{1001029}$
\end{tabular}
\end{center}

Applying the Chinese remainder theorem, and using the fact that $\prod_{p\in S}p$ is more than twice the absolute value of the largest coefficient of $23^3H_{-23}(P;x)$, we obtain
\[
23^3 H_{-23}(P;x) = 12167x^3 - 279841x^2 + 1900168x - 5097973 \in \Z[x].
\]
We note that the coefficients of the above polynomial are all much smaller than the product
$$\prod_{p\in S} p = 5398465666938830659283417535896039,$$
indicating that our height bound $B_P(-23)$ is actually bigger than it needs to be.
Dividing by $23^3$ and applying Lemma~\ref{lemma:twisted} we obtain
\[
H_1^{\parti}(x) = H_{-23}(P;x) = x^3 - 23x^2 + \frac{3592}{23}x - 419,
\]
which completes the execution of Algorithm~3 for $n=1$.  As proven in \cite{BruinierOno2011}, if we divide the trace of $H_n^{\parti}(x)$ by $24n-1$, we obtain the $n$th partition number $p(n)$.  In this case we have $23/23=1=p(1)$.

We now consider the case $n=24$, which is  the least $n$ for which $1-24n$ is not a fundamental discriminant.
We have $1-24\cdot 24 =-575= -5^2\cdot 23$, and Lemma~\ref{lemma:twisted} then implies that
\begin{equation}\label{eq:H24}
H_{24}^{\parti}(x) = -H_{-23}(P;-x)H_{-575}(P;x).
\end{equation}
We have already computed $H_{-23}(P;x)$, and in the same way we may compute
\footnotesize
\begin{align*}
H_{-575}(P;x) &= x^{18} - 905648\thinspace x^{17} + 7864919720287/23\thinspace x^{16} - 62085428963462224\thinspace x^{15}\\
&\hspace{27.5pt}+ 2500819220800663290310031/529\ x^{14} - 145570369368132345878793951/23\thinspace x^{13}\\
&\hspace{27.5pt}\cdots\\
&\hspace{27.5pt} + 758005997309239141979280480729944052789478182183267952/3700897225\thinspace x\\
&\hspace{27.5pt} -274989755819545226019386671943056995003866543720439419/18504486125.
\end{align*}
\normalsize
From \eqref{eq:H24} we obtain
\footnotesize
\begin{align*}
H_{24}^{\parti}(x) &= x^{21} - 905625\thinspace x^{20} + 341932201569\thinspace x^{19}  - 62077564185180110\thinspace x^{18}\\
&\hspace{27.5pt} + 2500063855637055742916679/529\thinspace x^{17}- 143069773154897117981992275/23\thinspace x^{16}\\
&\hspace{27.5pt} -  248682508073724592034185083695904/60835\thinspace x^{15}\\
&\hspace{27.5pt} + 4721274513295479628753048946698042/2645\thinspace x^{14} \\
&\hspace{27.5pt} - 684240866701755248448205419660018178147/1399205\thinspace x^{13}\\
&\hspace{27.5pt} + 828297525091153912001188772487055395656/12167\thinspace x^{12}\\
&\hspace{27.5pt} - 32704304695374273471069347508729088366971453/6436343\thinspace x^{11}\\
&\hspace{27.5pt} + 290553028842402057481729080665422874771306601/1399205\thinspace x^{10}\\
&\hspace{27.5pt} - 15618334996574598433984982031615985504271825288372/3700897225\thinspace x^{9}\\
&\hspace{27.5pt} + 2971138261271289839650966142959376571788416952712/160908575\thinspace x^{8}\\
&\hspace{27.5pt} + 67822191247241980381807708488720865403444300542792174/85120636175\thinspace x^{7}\\
&\hspace{27.5pt} - 10287891953477631667871642653944942982233172929865507/740179445\thinspace x^{6}\\
&\hspace{27.5pt} + 120072172960067820695115892912976299403813878193923504758/1957774632025\thinspace x^{5}\\
&\hspace{27.5pt} + 9442155332145807613622010202526881668517330792046133529/17024127235\thinspace x^{4}\\
&\hspace{27.5pt} - 944566531689753532003676376487531915501990271825184156855477/225144082682875\thinspace x^{3}\\
&\hspace{27.5pt} - 512515146501467199140764542151150418963279118308213518346717/9788873160125\thinspace x^{2}\\
&\hspace{27.5pt} - 35536755777441881604409993038352893457607117583456947874072/425603180875\thinspace x\\
&\hspace{27.5pt} +115220707688389449702123015544140880906620081818864116561/18504486125.
\end{align*}
\normalsize

If we divide the trace $905625$ of $H_{24}^{\parti}(x)$  by $24\cdot 24-1=575$ we obtain the partition number $p(24)=1575$, as expected.  A  table of all the polynomials $H_{n}^{\parti}(x)$ for $n\le 750$ is available online at
\url{http://math.mit.edu/~drew}.

\end{document}